
\documentclass[11pt, a4paper]{amsart}

\usepackage{amsfonts,amsmath,amssymb, amscd}

\newtheorem{theorem}{Theorem}[section]
\newtheorem{lemma}[theorem]{Lemma}
\newtheorem{definition}[theorem]{Definition}
\newtheorem{prop}[theorem]{Proposition}

\theoremstyle{definition}
\newtheorem{rem}[theorem]{Remark}
\newtheorem{exas}[theorem]{Examples}

\numberwithin{equation}{section}

\newcommand\eps{\varepsilon}
\newcommand\Frac{\operatorname{Frac}}

\renewcommand\AA{\mathcal A}
\newcommand\BB{\mathcal B}
\newcommand\UU{\mathcal U}
\newcommand\VV{\mathcal V}
\newcommand\ZZ{\mathcal Z}
\newcommand\mm{\mathfrak{m}}
\newcommand\ZZZ{\mathbb Z}

\DeclareMathOperator{\Ker}{Ker}
\DeclareMathOperator{\id}{id}
\DeclareMathOperator{\Id}{Id}
\DeclareMathOperator{\ab}{ab}

\title[Generic base algebras and universal comodule algebras]
{Generic base algebras and\\ universal comodule algebras for\\
some finite-dimensional Hopf algebras}

\author[Uma N. Iyer]
{Uma N. Iyer}
\address{Uma N. Iyer: 
Department of Mathematics and Computer Science, Bronx Community College,
2155 University Avenue, Bronx, New York 10453, USA}
\email{uma.iyer@bcc.cuny.edu}

\author[Christian Kassel]
{Christian Kassel}
\address{Christian Kassel: 
Institut de Recherche Math\'e\-ma\-tique Avanc\'ee,
CNRS \& Universit\'e de Strasbourg,
7 rue Ren\'{e} Descartes, 67084 Strasbourg, France}
\email{kassel@math.unistra.fr}

\keywords{Hopf algebra, comodule algebra, polynomial identity}

\subjclass[2010]{16R50, 16T05, 16T15}

\begin{document}

\begin{abstract}
After recalling the definitions and the properties of the ge\-ner\-ic base algebra
and of the universal comodule algebra attached to a Hopf algebra 
by Aljadeff and the second-named author,
we determine these algebras for the Taft algebras, the Hopf algebras~$E(n)$
and certain monomial Hopf algebras.
\end{abstract}

\maketitle

\section*{Introduction}

In\,\cite{AK} Eli Aljadeff and the second-named author associated to any finite-dimensional Hopf algebra~$H$
an algebra~$\BB_H$ of rational fractions, which is a finitely generated smooth domain
of Krull dimension equal to the dimension of~$H$.
The algebra~$\BB_H$ is called the \emph{generic base algebra} associated to~$H$; it is the subalgebra of coinvariants
of the generic Hopf Galois extension~$\AA_H$ introduced in \emph{loc.\ cit.}
One can view~$\BB_H$ as the ``base space'' of a ``non-commutative fiber bundle'' 
whose fibers are ``forms'' of~$H$ (see also\,\cite{Ka0}).

The generic base algebra is known for very few Hopf algebras:
it has been described for finite group algebras in\,\cite{AHN} 
and for the four-dimensional Sweedler algebra in\,\cite{AK}.
The first objective of this paper is to determine~$\BB_H$ for other finite-dimensional Hopf algebras
such as the Taft algebras, the Hopf algebras~$E(n)$ and certain monomial Hopf algebras, 
all natural generalizations of the Sweedler algebra.
Our calculations are based on the properties of~$\BB_H$ established in~\cite{KM}.

A theory of polynomial identities for comodule algebras was also worked out in\,\cite{AK}. It leads naturally to a
\emph{universal comodule algebra}~$\UU_H$, 
the analogue of the ``relatively free algebra'' in the classical theory of polynomial identities. 
The subalgebra of coinvariants~$\VV_H$ of~$\UU_H$ maps injectively into~$\BB_H$. 
In the few known cases, the injection turns~$\BB_H$ into a localization of~$\VV_H$.
We show that this also holds for the Hopf algebras considered here.
Finally for the same Hopf algebras we also describe a suitable central localization
of~$\UU_H$ as a $\BB_H$-module.

The paper is organized as follows.
In Section~\ref{sec-PI} we recall the definition and the properties of the main objects under investigation, namely
the universal comodule algebra and the generic base algebra.
We also raise the questions of finding a presentation by generators and relations 
of the generic base algebra~$\BB_H$ and 
of deciding whether~$\BB_H$ is a localization of the subalgebra of coinvariants~$\VV_H$ of~$\UU_H$.

In Section\,\ref{Taft-def} we give complete answers to these questions for Taft algebras.
We give similar answers for the Hopf algebras~$E(n)$ in Section~\ref{sec-En}
and for certain monomial Hopf algebras in Section~\ref{sec-monomial}.

Appendix\,\ref{app-kG} is concerned with the group algebra case; 
we summarize the results of~\cite{AHN,KM} 
relevant to the present paper, and we compute the generic base algebra for certain finite groups.

\section{Polynomial identities}\label{sec-PI}

We fix a ground field~$k$ of characteristic zero. 
All vector spaces, all algebras considered in this paper are
defined over~$k$; similarly, all linear maps are supposed to be $k$-linear.
The symbol~$\otimes$ denotes the tensor product over~$k$.

\subsection{Hopf algebras and comodule algebras}\label{ssec-prel}

We refer to~\cite{Mo} for generalities on Hopf algebras and comodules algebras.
As is traditional, we denote the coproduct of a Hopf algebra by~$\Delta$, its counit by~$\eps$,
and its antipode by~$S$.
We also use a Heyneman-Sweedler-type notation  
\[
\Delta(x) = x_1 \otimes x_2
\]
for the image under~$\Delta$ of an element~$x$ of a Hopf algebra~$H$, and we write
\[
\Delta^{(2)}(x) = x_1 \otimes x_2 \otimes x_3
\]
for its image under the iterated coproduct 
$\Delta^{(2)} = (\Delta \otimes \id_H) \circ \Delta$.

Recall that a (right) $H$-\emph{comodule algebra} over a Hopf $k$-algebra~$H$
is an associative unital $k$-algebra~$A$ 
equipped with a right $H$-comodule structure whose (coassociative, counital) \emph{coaction}
\[
\delta : A \to A \otimes H
\] 
is an algebra map.
The subalgebra~$A^H$ of \emph{coin\-var\-iants} of an $H$-comodule algebra~$A$
is the subalgebra
\begin{equation*}
A^H = \{ a \in A \, | \, \delta(a)  = a \otimes 1\} \, .
\end{equation*}

Twisted comodule algebras are basic examples of comodule algebras; let us recall their definition.
A \emph{two-cocycle}~$\alpha$ on a Hopf algebra~$H$
is a bilinear form $\alpha : H \times H \to k$ satisfying the cocycle condition
\begin{equation*}\label{cocycle}
\alpha(x_1,y_1)\, \alpha(x_2 y_2, z)
= \alpha(y_1, z_1)\, \alpha(x, y_2 z_2)
\end{equation*}
for all $x,y,z \in H$.
We always assume that $\alpha$ is invertible (with respect to the convolution product)
and normalized, i.e., $\alpha(x,1)  = \alpha(1,x) = \varepsilon(x)$ for all $x\in H$.

Let $u_H$ be a copy of the underlying vector space of~$H$.
Denote the identity map~$u$ from $H$ to~$u_H$ by $x \mapsto u_x$ ($x\in H$).
The \emph{twisted comodule algebra} ${}^{\alpha} H$ is defined as the vector space~$u_H$ equipped 
with the product given by
\begin{equation}\label{twisted-multiplication}
u_x \,   u_y = \alpha(x_1, y_1) \, u_{x_2 y_2}
\end{equation}
for all $x$, $y \in H$.
This product is associative thanks to the cocycle condition.
As~$\alpha$ is normalized, the unit of~${}^{\alpha} H$ is~$u_1$.

The algebra ${}^{\alpha} H$ is an $H$-comodule algebra
with coaction 
$\delta \colon {}^{\alpha} H \to {}^{\alpha} H \otimes H$
given for all $x\in H$ by
\begin{equation*}\label{twisted-coaction}
\delta (u_x) = u_{x_1} \otimes x_2 \, .
\end{equation*}
It is easy to check that the subalgebra of coinvariants of~${}^{\alpha} H$ coincides with~$k \, u_1$
(for more on twisted comodule algebras, see\,\cite[Sect.\,7]{Mo}).

\subsection{$H$-identities and the universal comodule algebra}\label{ssec-PI}

Let us recall the notion of an $H$-identity for an $H$-comodule algebra~$A$,
as introduced in\,\cite[Sect.\,2.2]{AK}.

Take a copy~$X_H$ of~$H$; the identity map from~$H$ to~$X_H$
sends an element $x\in H$ to the symbol~$X_x$. 
The map~$x \mapsto X_x$ is linear and is determined by its values on a linear basis of~$H$. 
Now consider the tensor algebra on~$X_H$:
\[
T(X_H) = \bigoplus_{i\geq 0} \, T^i(X_H) \, ,
\]
where $T^i(X_H) = (X_H)^{\otimes i}$.

There is a tautological $H$-comodule algebra structure on~$T(X_H)$ 
with coaction $\delta : T(X_H) \to T(X_H) \otimes H$ given on each generator~$X_x$ by
\begin{equation*}\label{T-coaction}
\delta(X_x) = X_{x_1} \otimes x_2 \, .
\end{equation*}

\begin{definition}
Given an $H$-comodule algebra~$A$, we say that
an element $P \in T(X_H)$ is an $H$-identity for~$A$
if $\mu(P) = 0$ for all $H$-comodule algebra maps $\mu : T(X_H) \to A$.
\end{definition}

Denote the set of all $H$-identities for~$A$ by~$\Id_H(A)$.
By~\cite[Prop.~2.2]{AK} 
the set~$I_H(A)$ is a two-sided ideal, right $H$-coideal of~$T(X_H)$,
and it is preserved by all comodule algebra endomorphisms of~$T(X_H)$.

The quotient algebra
$\UU_H(A) = T(X_H)/I_H(A)$
is an $H$-comodule algebra such that the canonical surjection $T(X_H) \to \UU_H(A)$
is a comodule algebra map. By definition, all $H$-identities for $A$ vanish in~$\UU_H(A)$,
which is the biggest quotient of~$T(X_H)$ for which this happens.
We call $\UU_H(A)$ the \emph{universal $H$-comodule algebra} attached to the $H$-comodule algebra~$A$
(in the classical literature on polynomial identities\,\cite{Ro}, $\UU_H(A)$ is called the \emph{relatively free algebra}).

\subsection{The universal comodule algebra map}\label{ssec-detect}

We now recall how to detect $H$-identities 
when $A$ is a twisted comodule algebra as defined in Section\,\ref{ssec-prel}.

Let  $\alpha$ be a normalized convolution invertible two-cocycle on~$H$
and ${}^{\alpha} H$ the corresponding twisted comodule algebra.
As shown in\,\cite[Sect.\,4]{AK}, the $H$-identities for~${}^{\alpha} H$ can be detected 
by a comodule algebra map
\[
\mu_{\alpha}: T(X_H) \to S(t_H) \otimes {}^{\alpha} H \, ,
\]
whose definition we now detail.

Consider a copy $t_H$ of~$H$, identifying each $x\in H$ linearly with the symbol~$t_x \in t_H$.
Define $S(t_H)$ to be the symmetric algebra on the vector space~$t_H$.

The algebra~$S(t_H) \otimes {}^{\alpha}H$ is generated by the symbols $t_x u_y$ ($x,y \in H)$ as a $k$-algebra
(we drop the tensor product sign~$\otimes$ between the $t$-symbols and the $u$-symbols). 
It is a comodule algebra whose coaction is $S(t_H)$-linear and extends the coaction of~${}^{\alpha}H$:
\begin{equation*}
\delta(t_x u_y) = t_x u_{y_1} \otimes y_2 \, .
\end{equation*}

The algebra map $\mu_{\alpha}: T(X_H) \to S(t_H) \otimes {}^{\alpha} H$ is defined for all $x\in H$ by
\begin{equation}\label{mu}
\mu_{\alpha}(X_x) = t_{x_1} \otimes u_{x_2} \, .
\end{equation}
It is an $H$-comodule algebra map.
We call $\mu_{\alpha}$ the \emph{universal comodule algebra map}.
Its main property is the following (for a proof, see\,\cite[Th.\,4.3]{AK}). 

\begin{prop}\label{detect}
An element $P \in T(X_H)$ is an $H$-identity for~${}^{\alpha} H$ 
if and only if $\mu_{\alpha}(P) = 0$. 
In other words, 
$I_H({}^{\alpha} H)  = \Ker \mu_{\alpha}$.
\end{prop}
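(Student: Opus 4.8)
The plan is to realize every $H$-comodule algebra map $\mu\colon T(X_H)\to{}^\alpha H$ as a specialization of the single map $\mu_\alpha$, indexed by a point $\lambda\in H^*$, and then to exploit that a polynomial function on $H^*$ vanishing at every point must be zero. First I would describe all comodule algebra maps into~${}^\alpha H$. Since $T(X_H)$ is free as an algebra on~$X_H$, such a map $\mu$ is determined by the linear assignment $x\mapsto\mu(X_x)$; and because $\delta\circ\mu$ and $(\mu\otimes\id_H)\circ\delta$ are both algebra maps out of the free algebra $T(X_H)$, the map $\mu$ is a comodule algebra map exactly when $\delta(\mu(X_x))=\mu(X_{x_1})\otimes x_2$ holds for all~$x$, that is, when $x\mapsto\mu(X_x)$ is colinear for the coaction of~${}^\alpha H$. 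Under the comodule identification ${}^\alpha H\cong H$ (sending $u_y$ to~$y$, so that $\delta$ becomes~$\Delta$), a colinear map $f$ obeys $\Delta(f(x))=f(x_1)\otimes x_2$; applying $\eps\otimes\id_H$ yields $f(x)=\lambda(x_1)\,x_2$ with $\lambda=\eps\circ f\in H^*$, and conversely each such formula is colinear. Translating back, the comodule algebra maps $T(X_H)\to{}^\alpha H$ are precisely the maps $\mu_\lambda$ given by $\mu_\lambda(X_x)=\lambda(x_1)\,u_{x_2}$ for $\lambda\in H^*$.

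Next I would link the $\mu_\lambda$ to~$\mu_\alpha$. For $\lambda\in H^*$ let $\mathrm{ev}_\lambda\colon S(t_H)\to k$ be the algebra map with $t_x\mapsto\lambda(x)$, which exists because $S(t_H)$ is the free commutative algebra on~$t_H$. Using~\eqref{mu} one computes $(\mathrm{ev}_\lambda\otimes\id)(t_{x_1}\otimes u_{x_2})=\lambda(x_1)\,u_{x_2}=\mu_\lambda(X_x)$, so the two algebra maps $\mu_\lambda$ and $(\mathrm{ev}_\lambda\otimes\id)\circ\mu_\alpha$ agree on the generators $X_x$ and therefore coincide. This factorization immediately gives one implication: if $\mu_\alpha(P)=0$, then for every comodule algebra map $\mu=\mu_\lambda$ into~${}^\alpha H$ we obtain $\mu(P)=(\mathrm{ev}_\lambda\otimes\id)(\mu_\alpha(P))=0$, so $P\in I_H({}^\alpha H)$.

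For the reverse implication I would separate points of~$S(t_H)$. Fix a basis $(y_i)$ of~$H$ and expand $\mu_\alpha(P)=\sum_i f_i\otimes u_{y_i}$ with $f_i\in S(t_H)$. If $P$ is an $H$-identity, then $\mu_\lambda(P)=0$ for all~$\lambda$, and the factorization gives $\sum_i \mathrm{ev}_\lambda(f_i)\,u_{y_i}=0$; since the $u_{y_i}$ are linearly independent, this forces $\mathrm{ev}_\lambda(f_i)=0$ for every~$i$ and every $\lambda\in H^*$. Identifying $S(t_H)$ with the algebra of polynomial functions on the finite-dimensional space $(t_H)^*\cong H^*$, each $f_i$ is then a polynomial vanishing at every point~$\lambda$, whence $f_i=0$ and $\mu_\alpha(P)=0$. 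I expect this last step to be the crux of the argument: it requires the evaluations $\mathrm{ev}_\lambda$ to detect the zero element of~$S(t_H)$, which rests on $k$ being infinite---a property guaranteed here by the standing assumption $\operatorname{char} k=0$, and one that genuinely fails over finite fields.
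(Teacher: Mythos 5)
Your proof is correct, and it is essentially the argument the paper points to (it gives no proof of its own but cites \cite[Th.\,4.3]{AK}): one classifies the comodule algebra maps $T(X_H)\to{}^{\alpha}H$ as the specializations $(\mathrm{ev}_\lambda\otimes\id)\circ\mu_{\alpha}$ for $\lambda\in H^*$ and then uses that polynomials over the infinite field $k$ are determined by their values. You correctly isolate the only delicate points, namely the colinearity classification $\mathrm{Hom}^H(H,H)\cong H^*$ and the need for $k$ to be infinite.
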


Let $\UU_H^{\alpha} =  \UU_H({}^{\alpha} H)$ and $I_H^{\alpha} =  I_H({}^{\alpha} H)$. 
It follows from the previous proposition that $\mu_{\alpha}$ induces an injection of comodule algebras
\begin{equation}\label{inject}
\UU_H^{\alpha} = T(X_H)/I_H^{\alpha} \, \hookrightarrow \, S(t_H) \otimes {}^{\alpha} H\, .
\end{equation}
We still denote the map\,\eqref{inject} by~$\mu_{\alpha}$.

The algebra $\UU_H^{\alpha}$ has two natural subalgebras, 
namely the algebra~$\VV_H^{\alpha} = (\UU_H^{\alpha})^H$ of coinvariants 
and the center~$\ZZ_H^{\alpha}$ of~$\UU_H^{\alpha}$.

In\,\cite[Prop.\,4.7]{AK} (see also \cite[Cor.\,3.3]{Ka1})
we proved that an element of~$\UU_H^{\alpha}$ is coinvariant
if and only if its image under~$\mu_{\alpha}$ belongs to the subalgebra $S(t_H) \otimes 1$ 
of~$S(t_H) \otimes {}^{\alpha} H$. In view of\,\eqref{inject}, $\VV_H^{\alpha}$ is a subalgebra of~$S(t_H)$:
\begin{equation*}
\VV_H^{\alpha} \, \hookrightarrow \, S(t_H) \, .
\end{equation*}
It follows that $\VV_H^{\alpha}$ is a commutative domain (i.e., without zero-divisors).

Similarly, by\,\cite[Prop.\,8.2]{AK} an element of~$\UU_H^{\alpha}$ is central
if and only if its image belongs to the subalgebra $S(t_H) \otimes Z({}^{\alpha} H)$,
where $Z({}^{\alpha} H)$ is the center of~${}^{\alpha} H$.
Again, in view of\,\eqref{inject} the center~$\ZZ_H^{\alpha}$ of~$\UU_H^{\alpha}$
sits inside $S(t_H) \otimes Z({}^{\alpha} H)$:
\begin{equation*}
\ZZ_H^{\alpha} \, \hookrightarrow \, S(t_H) \otimes Z({}^{\alpha} H) \, .
\end{equation*}
Therefore, $\ZZ_H^{\alpha}$ is a domain if $Z({}^{\alpha} H) $ has no zero-divisors.

It follows from the previous observations that the coinvariant elements of~$\UU_H^{\alpha}$ are all central:
$\VV_H^{\alpha} \subset \ZZ_H^{\alpha}$,
and that $\VV_H^{\alpha}  = \ZZ_H^{\alpha}$ if
the center of~${}^{\alpha} H$ is one-dimensional, in which case
the two-cocycle~$\alpha$ was called \emph{non-degenerate} in \cite[Sect.\,9]{AK}.

\subsection{Localizing the symmetric algebra}\label{subsec-loc}

By~\cite[Lemma~A.1]{AK}
there is a unique linear map $x \mapsto t^{-1}_x$
from~$H$ to the field of fractions~$\Frac S(t_H)$ of the symmetric algebra~$S(t_H)$
such that for all $x\in H$,
\begin{equation}\label{tt}
\sum_{(x)}\, t_{x_{1}} \, t^{-1}_{x_{2}}
= \sum_{(x)}\, t^{-1}_{x_{1}} \,  t_{x_{2}} = \eps(x) \, 1 \, .
\end{equation}
When $x$ is a \emph{group-like} element of~$H$, 
i.e., such that $\Delta(x) = x\otimes x$ and $\eps(x) = 1$,
then $t^{-1}_x = 1/t_x$.

Following~\cite[App.~B]{AK}, we denote by~$S(t_H)_{\Theta}$ the subalgebra of~$\Frac S(t_H)$ generated 
by all elements~$t_x$ and~$t^{-1}_x$ ($x\in H$). 

When $H$ is a \emph{pointed} Hopf algebra, then its coradical is the algebra of the group~$G(H)$
of group-like elements of~$H$.
In this case,  by\,\cite[Prop.\,B.2]{AK}, $S(t_H)_{\Theta}$ has a simple description
as the following localization of~$S(t_H)$:
\begin{equation}\label{pointed-loc}
S(t_H)_{\Theta} = S(t_H) \left[ \frac{1}{t_x} \right] _{x\in G(H)} 
\, .
\end{equation}

Recall also that for a general Hopf algebra~$H$ the algebra~$S(t_H)_{\Theta}$ 
carries a commutative Hopf algebra structure with coproduct~$\Delta$, counit~$\eps$
and antipode~$S$ given for all $x\in H$ by
\begin{equation}\label{Hopf-str}
\Delta(t_x) = t_{x_1} \otimes t_{x_2} \, , \quad \Delta(t^{-1}_x) = t^{-1}_{x_2} \otimes t^{-1}_{x_1} \, ,
\end{equation}
$\eps(t_x) = \eps(t^{-1}_x) = \eps(x)$ and $S(t_x) = t^{-1}_x$. This Hopf algebra is Takeuchi's
free commutative Hopf algebra on the coalgebra underlying~$H$; it satisfies the following universal property:
for any coalgebra map $f : H \to H'$ into a \emph{commutative Hopf algebra}~$H'$, there is a unique
Hopf algebra map $\tilde{f} : S(t_H)_{\Theta} \to H'$ extending~$f$, i.e., such that $\tilde{f}(t_x) = f(x)$ for all $x\in H$;
see\,\cite[Chap.\,IV]{Ta}.

Let us apply this universal property to the canonical surjection of Hopf algebras $q: H \to H_{\ab}$, where $H_{\ab}$ is the 
largest commutative Hopf algebra quotient of~$H$. The Hopf algebra map~$q$ induces a surjection of
Hopf algebras $\tilde{q} : S(t_H)_{\Theta} \to H_{\ab}$.
Using~$\tilde{q}$, we may equip $S(t_H)_{\Theta}$ with a right $H_{\ab}$-comodule algebra structure:
its coaction is the map~$\delta$ defined as the following composition:
\begin{equation}\label{coaction-SH}
\delta : S(t_H)_{\Theta} \overset{\Delta}{\longrightarrow} S(t_H)_{\Theta} \otimes S(t_H)_{\Theta} 
\overset{\id\otimes \tilde{q}}{\longrightarrow} S(t_H)_{\Theta} \otimes H_{\ab} \, .
\end{equation}

\subsection{The generic base algebra}\label{subsec-generic}

Now, to a pair $(H,\alpha)$ consisting of a Hopf algebra~$H$ and 
a normalized convolution invertible two-cocycle~$\alpha$,
we attach a bilinear map
$\sigma_{\alpha} : H \times H \to S(t_H)_{\Theta}$
with values in the previously defined algebra~$S(t_H)_{\Theta}$.
The map~$\sigma_{\alpha}$ is given for all $x,y \in H$ by
\begin{equation}\label{sigma-def}
\sigma_{\alpha}(x,y) = t_{x_1} \, t_{y_1} \, \alpha(x_2,y_2) \, t^{-1}_{x_3 y_3} \, .
\end{equation} 
The map $\sigma_{\alpha}$ is a two-cocycle of~$H$ with values in~$S(t_H)_{\Theta}$;
by construction, $\sigma_{\alpha}$~is cohomologous to~$\alpha$ over~$S(t_H)_{\Theta}$.
We call $\sigma_{\alpha}$ the \emph{generic cocycle} associated to~$\alpha$.
The cocycle $\alpha$ being invertible, so is~$\sigma_{\alpha}$,
with inverse $\sigma_{\alpha}^{-1}$ given for all $x,y \in H$ by
\begin{equation}\label{sigma^{-1}-def}
\sigma_{\alpha}^{-1}(x,y) = t_{x_1 y_1} \, \alpha^{-1}(x_2,y_2) \, t^{-1}_{x_3}  \, t^{-1}_{y_3}\, .
\end{equation}

Following~\cite[Sect.~5]{AK} and~\cite[Sect.~3]{Ka1},
we define the \emph{generic base algebra}~$\BB_H^{\alpha}$ attached to the pair $(H,\alpha)$
to be the subalgebra of~$S(t_H)_{\Theta}$ 
generated by the values of the generic cocycle~$\sigma_{\alpha}$ and 
of its inverse~$\sigma_{\alpha}^{-1}$.
Since $\BB_H^{\alpha}$ sits inside~$S(t_H)_{\Theta}$, it is a domain.

If~$H$ is \emph{finite-dimensional}, then by Theorem\,3.6 and Corollary\,3.7 of\,\cite{KM}
the following holds:

\begin{itemize}
\item[(a)]
$\BB_H^{\alpha}$ is a finitely generated smooth Noetherian domain
of Krull dimension equal to~$\dim_k H$;

\item[(b)]
$S(t_H)_{\Theta}$ is a finitely generated projective $\BB_H^{\alpha}$-module, 
from which it follows that $S(t_H)_{\Theta}$ is integral over~$\BB_H^{\alpha}$.
\end{itemize}

In the next subsection we explain why it is important to determine~$\BB_H^{\alpha}$.

\subsection{Relating the universal comodule algebra to the generic base algebra}\label{subsec-relating}

Let again $H$ be a Hopf algebra and 
$\alpha$~a normalized convolution invertible two-cocycle.
Using Lemma\,8.1 of\,\cite{AK} and following the proof of Proposition\,9.1 of\,\emph{loc.\ cit.}, 
we can prove that the map~$\mu_{\alpha}$ induces an embedding of
the subalgebra of coinvariants~$\VV_H^{\alpha}$ of~$\UU_H^{\alpha}$ into~$\BB_H^{\alpha}$:
\begin{equation*}
\VV_H^{\alpha} \, \hookrightarrow \, \BB_H^{\alpha} \, .
\end{equation*}

Following\,\cite[Sect.\,9]{AK}, we say that a two-cocycle~$\alpha$ is \emph{nice}
if $\BB_H^{\alpha}$ is a localization of~$\VV_H^{\alpha}$. 
If~$\alpha$ is nice, then $\BB_H^{\alpha} \otimes_{\VV_H^{\alpha}} \UU_H^{\alpha}$ is a central localization of
the universal comodule algebra~$\UU_H^{\alpha}$ satisfying the following two properties.

\begin{itemize}
\item[(i)]
By\,\cite[Th.\,9.3]{AK} 
the extension
$\BB_H^{\alpha} \subset \BB_H^{\alpha} \otimes_{\VV_H^{\alpha}} \UU_H^{\alpha}$ is a \emph{cleft $H$-Galois extension};
in particular, there is a comodule isomorphism (in general not an algebra isomorphism)
\begin{equation*}
\BB_H^{\alpha} \otimes_{\VV_H^{\alpha}} \UU_H^{\alpha} \cong \BB_H^{\alpha} \otimes H \, .
\end{equation*}
It follows that after a suitable central localization the universal comodule algebra~$\UU_H^{\alpha}$
becomes a free module of rank~$\dim_k(H)$ over its subalgebra of coinvariants.

\item[(ii)]
The comodule algebra $\AA_H^{\alpha}  = \BB_H^{\alpha}\otimes_{\VV_H^{\alpha}} \UU_H^{\alpha}$ is a 
``versal deformation space'' for the \emph{forms} of~${}^{\alpha} H$: 
any twisted comodule algebra~$A$ that is a form of~${}^{\alpha} H$
(i.e., such that $k'\otimes_k A \cong k'\otimes_k {}^{\alpha} H$ for some field extension $k'$ of~$k$)
is isomorphic to a comodule algebra of the form 
\begin{equation*}
\AA_H^{\alpha}/ \mm \AA_H^{\alpha} \, ,
\end{equation*}
where~$\mm$ is some maximal ideal of~$\BB_H^{\alpha}$.
Conversely, if $H$ is finite-dimensional (or cocommutative), then for any maximal ideal~$\mm$ of~$\BB_H^{\alpha}$,
the comodule algebra~$\AA_H^{\alpha}/ \mm \AA_H^{\alpha}$ is a form of~${}^{\alpha} H$.
For details and proofs, see\,\cite[Sect.\,7]{AK}) and\,\cite[Sect.\,3]{KM}.
\end{itemize}

Therefore, it is crucial for us
\begin{itemize}
\item[(1)]
to find a presentation by generators and relations of the generic base algebra~$\BB_H^{\alpha}$
(which will allow us to obtain all its maximal ideals) and 

\item[(2)]
to determine if a given two-cocycle is nice or not.
\end{itemize}

The generic base algebra~$\BB_H^{\alpha}$ is known when $H = kG$ is a group algebra
(see\,\cite{AHN}, \cite[Example\,4.3]{Ka1}, and the appendix at the end of the present paper).
A presentation by generators and relations of~$\BB_H^{\alpha}$ when $H$ is the four-dimensional Sweedler algebra 
was given in\,\cite[Cor.\,10.4]{AK}).
For all these Hopf algebras, $\BB_H^{\alpha}$ is \emph{rational}
in the sense that it is the localization of a polynomial algebra.

We also know that any two-cocycle on a group algebra or on the Sweedler
algebra is nice (see Proposition\,9.5 and Theorem\,10.3 in\,\cite{AK}). 
Similarly, any two-cocycle on a cocommutative Hopf algebra is nice (see\,\cite[Remark\,4.6]{KM}).

\subsection{Functoriality}\label{subsec-funct}

Consider pairs $(H,\alpha)$, where $H$ is a Hopf algebra and 
$\alpha$~is a normalized convolution invertible two-cocycle.
We define a map of such pairs $(H,\alpha) \to (H',\alpha')$
to be a Hopf algebra map $\varphi : H \to H'$ such that $\alpha' \circ (\varphi \times \varphi) = \alpha$.

Now define an algebra map $\varphi_T: T(X_H) \to T(X_{H'})$ by
\begin{equation*}
\varphi_T(X_x) = X_{\varphi(x)} \qquad (x\in H) \, . 
\end{equation*}
Similarly, define an algebra map $\varphi_S: S(t_H) \otimes {}^{\alpha} H \to S(t_{H'}) \otimes {}^{\alpha'} H'$ by
\begin{equation*}
\varphi_S(t_x u_y) = t_{\varphi(x)} u_{\varphi(y)}  \qquad (x,y\in H) \, .
\end{equation*}

\begin{prop}\label{prop-U-funct}
Under these hypotheses, the map $\varphi_T$ induces a comodule algebra map
$\varphi_U: \UU_H^{\alpha} \to \UU_{H'}^{\alpha'}$.
Moreover, if $\varphi_S$ is injective, then so is~$\varphi_U$.
\end{prop}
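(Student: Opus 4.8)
The plan is to show that $\varphi_T$ maps the ideal $I_H^\alpha$ of $H$-identities for ${}^\alpha H$ into the corresponding ideal $I_{H'}^{\alpha'}$, which will give a well-defined algebra map on the quotients, and then to check that this induced map respects the comodule structures. First I would verify that $\varphi_T$ is itself a comodule algebra map with respect to the tautological coactions on $T(X_H)$ and $T(X_{H'})$. Since $\delta(X_x) = X_{x_1}\otimes x_2$ and $\varphi$ is a Hopf algebra map (hence a coalgebra map satisfying $\Delta'\circ\varphi = (\varphi\otimes\varphi)\circ\Delta$), one computes directly that $(\varphi_T\otimes\varphi)\circ\delta = \delta'\circ\varphi_T$ on each generator, and this extends multiplicatively to all of $T(X_H)$.

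The heart of the argument is a compatibility between $\varphi_T$, $\varphi_S$, and the two universal comodule algebra maps $\mu_\alpha$ and $\mu_{\alpha'}$. The key identity I would establish is the commutativity of the square
\begin{equation*}
\varphi_S \circ \mu_\alpha = \mu_{\alpha'} \circ \varphi_T \, .
\end{equation*}
On a generator this reads $\varphi_S(t_{x_1}\otimes u_{x_2}) = t_{\varphi(x)_1}\otimes u_{\varphi(x)_2}$, which follows again from $\varphi$ being a coalgebra map together with the definition of $\varphi_S$; the multiplicative structure then propagates the identity to all of $T(X_H)$, where one uses the hypothesis $\alpha' \circ (\varphi\times\varphi) = \alpha$ to see that $\varphi_S$ is genuinely an algebra map out of $S(t_H)\otimes{}^\alpha H$ (the twisted products in source and target are matched by this condition). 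Once this square commutes, the characterization of $I_H^\alpha$ as $\Ker\mu_\alpha$ from Proposition~\ref{detect} does the work: if $P\in I_H^\alpha$, then $\mu_{\alpha'}(\varphi_T(P)) = \varphi_S(\mu_\alpha(P)) = \varphi_S(0) = 0$, so $\varphi_T(P)\in\Ker\mu_{\alpha'} = I_{H'}^{\alpha'}$. Hence $\varphi_T$ descends to an algebra map $\varphi_U: \UU_H^\alpha \to \UU_{H'}^{\alpha'}$, which is a comodule algebra map because $\varphi_T$ is one and the quotient maps are comodule algebra maps.

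For the injectivity statement, I would exploit the commuting square at the level of the induced injections. By\,\eqref{inject} the maps $\mu_\alpha$ and $\mu_{\alpha'}$ are injective on $\UU_H^\alpha$ and $\UU_{H'}^{\alpha'}$ respectively, and the square above descends to a commuting square of these injections with $\varphi_U$ on the left and $\varphi_S$ on the right. If $\varphi_S$ is assumed injective, then $\mu_{\alpha'}\circ\varphi_U = \varphi_S\circ\mu_\alpha$ is a composite of injections, hence injective; since $\mu_{\alpha'}$ is in particular injective this forces $\varphi_U$ to be injective as well.

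The main obstacle I anticipate is not the injectivity deduction, which is a formal diagram chase, but rather the careful verification that $\varphi_S\circ\mu_\alpha = \mu_{\alpha'}\circ\varphi_T$ holds as a genuine identity of algebra maps rather than merely on generators. One must confirm that $\varphi_S$ is well-defined and multiplicative on the twisted tensor product $S(t_H)\otimes{}^\alpha H$ — this is exactly where the cocycle-compatibility hypothesis $\alpha' \circ (\varphi\times\varphi) = \alpha$ is indispensable, since the product\,\eqref{twisted-multiplication} involves $\alpha$ and must be carried to the product involving $\alpha'$. I would track this bookkeeping with the Heyneman--Sweedler notation, being attentive to the fact that $\varphi$ intertwines the coproducts, so that applying $\varphi$ after taking $\Delta$ agrees with taking $\Delta'$ after $\varphi$; the cocycle condition then guarantees the products match term by term.
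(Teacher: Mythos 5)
Your proposal is correct and follows essentially the same route as the paper: establish the commuting square $\mu_{\alpha'}\circ\varphi_T=\varphi_S\circ\mu_{\alpha}$ on generators, use Proposition~\ref{detect} to descend $\varphi_T$ to $\varphi_U$, and deduce injectivity of $\varphi_U$ from that of $\varphi_S\circ\mu_{\alpha}$. Your additional check that the hypothesis $\alpha'\circ(\varphi\times\varphi)=\alpha$ makes $\varphi_S$ multiplicative on the twisted product is exactly the point the paper leaves implicit, and it is carried out correctly.
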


\begin{proof}
It is easy to verify that $\mu_{\alpha'} \circ  \varphi_T = \varphi_S \circ \mu_{\alpha}$
(check it on the generators~$X_x$, $x\in H$). 
The map $\varphi_T$ is a comodule algebra map since it is induced by the Hopf algebra map~$\varphi$.
Then by Proposition\,\ref{detect} there is a unique comodule algebra map 
$\varphi_U: \UU_H^{\alpha} \to \UU_{H'}^{\alpha'}$ such that 
\begin{equation*}\label{mu-varphi}
\mu_{\alpha'} \circ  \varphi_U = \varphi_S \circ \mu_{\alpha} \, .
\end{equation*}
The second statement follows from this and 
from the injectivity of~$\varphi_S$ and of~$\mu_{\alpha}$ on~$\UU_H^{\alpha}$.
\end{proof}

Now the restriction of~$\varphi_S$ to $S(t_H)$ sends $S(t_H)$ to $S(t_{H'})$.
It is clear that it extends to an algebra map
$\varphi_S: S(t_H)_{\Theta} \to S(t_{H'})_{\Theta}$
with
\[
\varphi_S(t_x) = t_{\varphi(x)} \quad\text{and}\quad 
\varphi_S(t^{-1}_x) = t^{-1}_{\varphi(x)}  \qquad (x\in H)\, .
\]

\begin{prop}\label{prop-B-funct}
The algebra map $\varphi_S$ sends $\BB_H^{\alpha}$ to~$\BB_{H'}^{\alpha'}$.
\end{prop}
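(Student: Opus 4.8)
The plan is to use the defining description of the generic base algebra: by construction $\BB_H^{\alpha}$ is the subalgebra of $S(t_H)_{\Theta}$ generated by all the values $\sigma_{\alpha}(x,y)$ and $\sigma_{\alpha}^{-1}(x,y)$ with $x,y \in H$. Since $\varphi_S$ is an algebra homomorphism and $\BB_{H'}^{\alpha'}$ is a subalgebra of $S(t_{H'})_{\Theta}$, it suffices to check that $\varphi_S$ carries each of these generators into $\BB_{H'}^{\alpha'}$. Concretely, I would establish the two naturality identities
\[
\varphi_S(\sigma_{\alpha}(x,y)) = \sigma_{\alpha'}(\varphi(x),\varphi(y)) \quad\text{and}\quad \varphi_S(\sigma_{\alpha}^{-1}(x,y)) = \sigma_{\alpha'}^{-1}(\varphi(x),\varphi(y)) \, ,
\]
whose right-hand sides are values of the generic cocycle and of its inverse for the pair $(H',\alpha')$, hence lie in $\BB_{H'}^{\alpha'}$ by definition.

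First I would apply $\varphi_S$ to the defining formula\,\eqref{sigma-def}, using $\varphi_S(t_x) = t_{\varphi(x)}$, $\varphi_S(t^{-1}_x) = t^{-1}_{\varphi(x)}$, and the $k$-linearity of $\varphi_S$ (the scalar factor $\alpha(x_2,y_2)$ passes through unchanged); this yields $\varphi_S(\sigma_{\alpha}(x,y)) = t_{\varphi(x_1)}\, t_{\varphi(y_1)}\, \alpha(x_2,y_2)\, t^{-1}_{\varphi(x_3 y_3)}$. Then I would invoke the three defining properties of a map of pairs $(H,\alpha)\to(H',\alpha')$: that $\varphi$ is an algebra map, so $\varphi(x_3 y_3) = \varphi(x_3)\varphi(y_3)$; that $\alpha = \alpha'\circ(\varphi\times\varphi)$, so $\alpha(x_2,y_2) = \alpha'(\varphi(x_2),\varphi(y_2))$; and that $\varphi$ is a coalgebra map, so it commutes with the iterated coproduct, giving $\varphi(x_1)\otimes\varphi(x_2)\otimes\varphi(x_3) = \varphi(x)_1\otimes\varphi(x)_2\otimes\varphi(x)_3$. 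Substituting these facts re-expresses the displayed element as $\sigma_{\alpha'}(\varphi(x),\varphi(y))$ via\,\eqref{sigma-def} read on $\varphi(x),\varphi(y)\in H'$. The computation for $\sigma_{\alpha}^{-1}$ from\,\eqref{sigma^{-1}-def} is entirely parallel; it uses $\alpha^{-1} = (\alpha')^{-1}\circ(\varphi\times\varphi)$, which follows from $\alpha = \alpha'\circ(\varphi\times\varphi)$ together with $\varphi$ being a coalgebra map preserving the counit, by uniqueness of convolution inverses.

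There is no genuine obstacle here: the statement is a naturality check, and its entire content lies in the bookkeeping of the Heyneman--Sweedler notation. The one point requiring a little care is the coalgebra step, namely that $\varphi$ intertwines the iterated coproducts $\Delta^{(2)}$ on $H$ and $H'$; this is immediate from $\varphi$ being a coalgebra map, but it must be applied to re-index the three-fold sum so that every $t$- and $t^{-1}$-symbol is written in terms of the Sweedler components of $\varphi(x)$ and $\varphi(y)$ rather than those of $x$ and $y$. Once the two generator identities are in hand, the inclusion $\varphi_S(\BB_H^{\alpha}) \subseteq \BB_{H'}^{\alpha'}$ follows at once from $\varphi_S$ being an algebra homomorphism.
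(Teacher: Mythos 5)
Your proposal is correct and is essentially the paper's own argument: the paper proves the proposition by asserting the identity $\varphi_S \circ \sigma_{\alpha}^{\pm 1} = \sigma_{\alpha'}^{\pm 1} \circ (\varphi \times \varphi)$ and calling it straightforward, and your write-up simply carries out that "straightforward" verification in detail (including the correct justification that $\alpha^{-1} = (\alpha')^{-1}\circ(\varphi\times\varphi)$ via uniqueness of convolution inverses). No gaps.
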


\begin{proof}
This follows from the equality
$\varphi_S \circ \sigma_{\alpha}^{\pm 1} = \sigma_{\alpha'}^{\pm 1} \circ (\varphi \times \varphi)$,
which is straightforward.
\end{proof}

\subsection{Trivial cocycle}\label{ssec-trivial}

Let us now assume that the two-cocycle $\alpha$ is the \emph{trivial} two-cocycle 
\begin{equation*}
\alpha_0: (x,y) \mapsto \eps(x) \eps(y)  \qquad (x,y \in H) \, .
\end{equation*}
In this case it follows from\,\eqref{twisted-multiplication}
that ${}^{\alpha} H$ coincides as a $H$-comodule algebra with~$H$ itself, 
the coaction being the coproduct,
and that the linear isomorphism $u: H \to u_H$ is a Hopf algebra map.
This allows us to write~$x$ instead of~$u_x$ ($x\in H)$.

When $\alpha = \alpha_0$, 
we write $I_H$ for~$I_H^{\alpha}$, $\UU_H$ for~$\UU_H^{\alpha}$,
$\VV_H$ for~$\VV_H^{\alpha}$, and $\BB_H$ for~$\BB_H^{\alpha}$.
By Proposition\,\ref{detect}, $I_H$ is the kernel of the comodule algebra map $\mu_0: T(X_H) \to S(t_H) \otimes H$
given by 
\begin{equation}\label{mu-triv}
\mu_0(X_x) = t_{x_1} \otimes x_2 \, . 
\end{equation}

Also when $\alpha = \alpha_0$, we write $\sigma^{\pm 1}$ instead of~$\sigma_{\alpha}^{\pm 1}$.
In this case Formulas\,\eqref{sigma-def} and\,\eqref{sigma^{-1}-def} become
\begin{equation}\label{sigma0-def}
\sigma(x,y) = t_{x_1} \, t_{y_1} \, t^{-1}_{x_2 y_2} 
\quad\text{and}\quad
\sigma^{-1}(x,y) = t_{x_1 y_1} \, t^{-1}_{x_2}  \, t^{-1}_{y_2} \, .
\end{equation}

The case $\alpha = \alpha_0$ is important because by\,\cite[Prop.\,2.1]{KM}
we always can reduce the generic base algebra attached to a non-trivial cocycle to
the generic base algebra attached to the trivial cocycle. This works as follows: 
given a convolution invertible two-cocycle~$\alpha$ on~$H$,
define the Hopf algebra~$L = {}^{\alpha} H ^{\alpha^{-1}}$ as the coalgebra $H$
with the product
\begin{equation*}
x * y = \alpha(x_1, y_1) \, x_2 y_2 \, \alpha^{-1}(x_3, y_3)  \qquad (x,y \in H) \, .
\end{equation*}
Then inside~$S(t_H)_{\Theta}$, we have the equality
\begin{equation*}\label{B=B}
\BB_H^{\alpha} = \BB_L \, .
\end{equation*}

Since $L = H$ when $\alpha$ is a \emph{lazy} two-cocycle in the sense of~\cite{BC},
we have $\BB_H^{\alpha} = \BB_H$ for all lazy two-cocycles. 
On a \emph{cocommutative} Hopf algebra all two-cocycles are lazy, 
so that $\BB_H^{\alpha} = \BB_H$ for any two-cocycle~$\alpha$ on such a Hopf algebra.

This is why in the sequel we concentrate on Hopf algebras equipped with the trivial cocycle. 

\begin{rem}
For a general Hopf algebra the equality $\BB_H^{\alpha} = \BB_H$ does not necessarily hold. 
It is not even true that $\BB_H^{\alpha} = \BB_H^{\beta}$ if $\alpha$ and $\beta$ are cohomologous
two-cocycles.

Indeed, let $H$ be the four-dimensional Sweedler algebra; 
any two-cocycle~$\alpha$ on~$H$ is known to be cohomologous to a lazy one, say~$\beta$.
On one hand, by the previous observations, $\BB_H^{\beta} = \BB_H$; 
we will prove in Section\,\ref{ssec-BH-Taft} that $\BB_H$ is the subalgebra of~$S(t_H)_{\Theta}$ 
of elements of degree zero for some specific $\ZZZ/2$-grading.
Now, the formula 
\[
\sigma_{\alpha}(y,y) = \frac{at_y^2 + b t_1 t_y + ct_1^2}{t_1}
\]
of\,\cite[Lemma\,10.1]{AK} shows that, since $t_1 t_y$ is of degree~$\neq 0$,
$\BB_H^{\alpha}$ must be different from~$\BB_H$
for a general twisted comodule algebra ${}^{\alpha} H$ of the Sweedler
algebra (one corresponding to a non-zero parameter~$b$).
\end{rem}

\section{Taft algebras}\label{Taft-def}

The first examples we consider are Taft algebras.
In this section we shall give a presentation of the generic base algebra~$\BB_H$ and
show that the trivial two-cocycle is nice,
thus providing a positive answer to the problem posed in\,\cite[Sect.\,5.3]{Ka1}.
We shall also describe the universal comodule algebra~$\UU_H$ as a $\BB_H$-algebra
after a suitable localization.

\subsection{Definition}

Fix an integer $n\geq 2$.
We assume that the ground field~$k$ contains a primitive $n$-th root of unity~$q$.

The Taft algebra~$H = H_{n^2}$ has the following presentation as a $k$-algebra:
\[
H = k \, \langle\, x,y \,|\, x^n = 1 \, , \; yx = q xy\, , \;  y^n = 0 \,  \rangle .
\]
The set $\{x^iy^j\}_{0 \leq i,j < n}$ is a basis of the vector space~$H$, which therefore is of dimension~$n^2$. 

The algebra~$H$ is a Hopf algebra with coproduct~$\Delta$ and counit~$\eps$ defined by
\begin{equation}\label{coproduct}
\Delta(x) = x \otimes x\, , \quad  \Delta(y) = 1 \otimes y + y \otimes x\, , \quad
\eps(x) = 1\, , \quad \eps(y) = 0 \, .
\end{equation}
When $n=2$, the algebra~$H$ is the four-dimensional Sweedler algebra.

In the sequel we shall need a formula for the image of any basis element under~$\Delta$.
Recall the following standard notation:
for any integer $j\geq 0$, set
\[
[j] =\frac{q^j-1}{q-1} \, ,
\]
and $[j]! = [j][j-1]\cdots [1]$ for $j\geq 1$, while $[0]!=1$. 
Lastly, we set
\[
\left[ \begin{matrix}
j \\
0
\end{matrix}\right]
= 1
\quad\text{and}\quad
\left[ \begin{matrix}
j \\
r
\end{matrix}\right]
= \frac{[j][j-1]\cdots [j-r+1]}{[r]!}
\]
if $1 \leq r \leq j < n$.
Then
\begin{equation}\label{Taft-coproduct}
\Delta(x^i y^j) = 
\sum_{r=0}^j \, 
\left[ \begin{matrix}
j \\
r
\end{matrix}\right]
x^i y ^r \otimes x^{i+r} y^{j-r} \, . 
\end{equation}

Recall also that $H$ is a pointed Hopf algebra with $G(H) \cong \ZZZ/n$. 
The set of its group-like elements consists of the powers of~$x$.
Also observe that the commutative Hopf algebra quotient~$H_{\ab}$ of~$H$ is given by
\begin{equation*}
H_{\ab} = H/(y) = k[G(H)] \cong k[\ZZZ/n] \, .
\end{equation*}

\subsection{The generic base algebra}\label{ssec-BH-Taft}

The algebra~$S(t_H)$ can be identified with the polynomial algebra on the indeterminates $t_{x^iy ^j}$ ($0\leq i,j < n$).
By\,\eqref{pointed-loc} the localization~$S(t_H)_{\Theta}$ of~$S(t_H)$ 
introduced in\,Section\,\ref{subsec-loc} is obtained from $S(t_H)$ 
by inverting the elements $t_1, t_x, t_{x^2}, \ldots, t_{x^{n-1}}$ 
corresponding to the group-like elements of~$H$:
\begin{equation*}
S(t_H)_{\Theta} = S(t_H) \left[ \frac{1}{t_{x^i}}  \right]_{0\leq i < n} .
\end{equation*} 

As observed in Section\,\ref{subsec-loc}, $S(t_H)_{\Theta}$ is a Hopf algebra.
By\,\eqref{coaction-SH} and \eqref{Taft-coproduct}, the coproduct on an element~$t_{x^iy ^j}$
is given by
\begin{equation}\label{Taft-coproduct-tt}
\Delta(t_{x^iy ^j}) = 
\sum_{r=0}^j \, 
\left[ \begin{matrix}
j \\
r
\end{matrix}\right]
t_{x^i y ^r} \otimes t_{x^{i+r} y^{j-r}} \, . 
\end{equation}

Let us now determine the subalgebra~$\BB_H$ of~$S(t_H)_{\Theta}$. 
Consider the set~$\Gamma_0$ consisting of the following $n$ elements of~$S(t_H)$:
$t_x t_{x^{n-1}}$ and $t_{x^i}/{(t_x)^i}$ for $0\leq i < n$, $i\neq 1$;
these elements are invertible in~$S(t_H)_{\Theta}$
and we denote by~$\Gamma_0^{-1}$ the set of inverses of the elements of~$\Gamma_0$.

Let $\Gamma_1$ be the set consisting of the elements $t_{x^iy^j} t_{x^k}$, where $j\not\equiv 0$
and $i+j+k \equiv 0 \pmod n$; the cardinality of~$\Gamma_1$ is $n(n-1)$.

\begin{theorem}\label{thm-BH-Taft}
Let $n\geq 3$.

(a) The $n^2$ elements of $\Gamma_0 \cup \Gamma_1$ are algebraically independent.

(b) The generic base algebra $\BB_H$ is given by
\begin{equation*}
\BB_H = k[\Gamma_0, \Gamma_0^{-1}, \Gamma_1] \, .
\end{equation*}
\end{theorem}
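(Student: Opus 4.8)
The plan is to equip $S(t_H)_{\Theta}$ with the $\ZZZ/n$-grading coming from the $H_{\ab}$-comodule structure of\,\eqref{coaction-SH} and to show that $\BB_H$ is exactly its degree-zero component. Applying $(\id\otimes\tilde q)\circ\Delta$ to\,\eqref{Taft-coproduct-tt} and using $H_{\ab}=k[\ZZZ/n]$, only the top summand survives under $\tilde q$, giving $\delta(t_{x^iy^j})=t_{x^iy^j}\otimes\bar x^{\,i+j}$ and $\delta(t^{-1}_{x^iy^j})=t^{-1}_{x^iy^j}\otimes\bar x^{-i}$; thus $t_{x^iy^j}$ has degree $i+j$ and $t^{-1}_{x^iy^j}$ has degree $-i$ modulo~$n$. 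A term-by-term computation on\,\eqref{sigma0-def} then shows that each value $\sigma(z,w)$ and $\sigma^{-1}(z,w)$ is homogeneous of degree~$0$ (in the generic term of $\sigma(x^ay^b,x^cy^d)$ the contributions $a+r$, $c+s$ and $-(a+r+c+s)$ cancel), so $\BB_H\subseteq (S(t_H)_{\Theta})_0$; and one checks directly that every element of $\Gamma_0\cup\Gamma_1$ is homogeneous of degree~$0$.

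First I would dispatch part~(a): the $n^2$ elements of $\Gamma_0\cup\Gamma_1$ are Laurent monomials in the $t_{x^iy^j}$, hence algebraically independent iff their exponent vectors in $\ZZZ^{n^2}$ are linearly independent. Each $y$-variable $t_{x^iy^j}$ with $j\ge 1$ occurs in exactly one element of $\Gamma_1$, so any dependence forces all $\Gamma_1$-coefficients to vanish and reduces the problem to the $n$ exponent vectors of $\Gamma_0$ on the group-like coordinates; these assemble into an explicit integer matrix whose determinant I expect to be $\pm n$. The same bookkeeping gives $(S(t_H)_{\Theta})_0=k[\Gamma_0,\Gamma_0^{-1},\Gamma_1]$: the degree-zero group-like Laurent monomials form a sublattice of $\ZZZ^n$ of index~$n$, while $\Gamma_0^{\pm1}$ generates a sublattice of the same index contained in it, hence all of it; and any degree-zero monomial, after replacing each factor $t_{x^iy^j}$ ($j\ge 1$) by the generator $t_{x^iy^j}t_{x^k}\in\Gamma_1$ with $k\equiv -i-j$, becomes an element of $k[\Gamma_1]$ times a residual group-like Laurent monomial that is again of degree~$0$.

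It then remains to prove $\Gamma_0^{\pm1},\Gamma_1\subseteq\BB_H$. For the group-like part this is the finite-group situation: $\sigma(x^a,x^b)=t_{x^a}t_{x^b}/t_{x^{a+b}}$, so $t_1=\sigma(1,1)$ and the elements $t_{x^i}/t_x^{\,i}$ and $t_xt_{x^{n-1}}$ are products of values of $\sigma$ and $\sigma^{-1}$. For $\Gamma_1$ I would induct on $j$, the key computation being that, modulo the degree-zero monomials whose $y$-factors all have $y$-degree $<j$ (already in $\BB_H$ by induction),
\[
\sigma(x,x^{i}y^{j})-\sigma(x^{i}y^{j},x)\equiv(1-q^{\,j})\,t_x\,t_{x^i}\,t^{-1}_{x^{\,i+1}y^{j}}.
\]
Indeed the two terms carrying the single factor $t_{x^{i}y^{j}}$ cancel, the intermediate summands involve only $y$-factors of $y$-degree $<j$, and the surviving term comes from the coproduct piece $1\otimes y^{j}$, whose coefficient acquires the factor $q^{\,j}$ on one side. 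Since $1-q^{\,j}\ne 0$ for $1\le j\le n-1$, and the part of $t^{-1}_{x^{i+1}y^{j}}$ that is linear in $t_{x^{i+1}y^{j}}$ equals $-t_{x^{i+1}y^{j}}/(t_{x^{i+1}}t_{x^{i+1+j}})$ up to lower $y$-degree terms, this isolates the $\Gamma_1$-generator of first index $i+1$; letting $i$ range over $\ZZZ/n$ yields all of $\Gamma_1$ in $y$-degree~$j$. Combining the three inclusions gives $(S(t_H)_{\Theta})_0=k[\Gamma_0,\Gamma_0^{-1},\Gamma_1]\subseteq\BB_H\subseteq(S(t_H)_{\Theta})_0$, hence~(b).

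The hard part will be the inductive step for $j\ge 2$: it rests on an accurate description of the rational elements $t^{-1}_{x^my^l}$ obtained from the defining relation $\sum_r\,t_{(x^my^l)_1}t^{-1}_{(x^my^l)_2}=\eps(x^my^l)$, specifically on isolating their linear-in-$t_{x^my^l}$ part and checking that the remainder is a combination of products of \emph{strictly} lower $y$-degree factors, so that the induction hypothesis applies, together with verifying the cancellation of the top terms above. A less computational route to the single hardest inclusion $(S(t_H)_{\Theta})_0\subseteq\BB_H$ would be to invoke that $S(t_H)_{\Theta}$ is integral over $\BB_H$ by property~(b) of Section~\ref{subsec-generic} and that $\BB_H$ is normal, being smooth by property~(a); then it would suffice to establish $\Frac\BB_H=\Frac(S(t_H)_{\Theta})_0$, which only requires each $y$-degree-$j$ variable to be rationally expressible through values of $\sigma$, rather than the exact membership of the $\Gamma_1$-generators — though this still needs the same inductive control of $t^{-1}_{x^my^l}$.
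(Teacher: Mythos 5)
Your proposal is correct in outline, but it takes a genuinely different route from the paper on the one point that matters. For part (a) the paper computes a Jacobian determinant (which factors through an $n\times n$ minor equal to $\bigl(1+(-1)^{n-1}(n-1)\bigr)t_{x^{n-1}}/(t_x)^{(n-2)(n+1)/2}$), whereas you use the exponent-lattice criterion for Laurent monomials; both work, and your determinant $\pm n$ also feeds directly into the index count you use later. The real divergence is in part (b). The paper imports from Kassel--Masuoka the fact that $S(t_H)_{\Theta}$ is a free $\BB_H$-module (valid here because $G(H)\to G(H_{\ab})$ is an isomorphism and $H$ is pointed), which yields $\BB_H=S(t_H)_{\Theta}^{H_{\ab}}$, i.e.\ the whole degree-zero component $S_0$; the only thing left to prove is then the purely combinatorial statement $S_0=k[\Gamma_0,\Gamma_0^{-1},\Gamma_1]$ (the paper's Lemma~\ref{lem-S0}, which your lattice argument reproduces). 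You instead prove the hard inclusion $S_0\subseteq\BB_H$ by hand, exhibiting each $\Gamma_1$-generator as a polynomial in values of $\sigma^{\pm1}$ via the identity
\[
\sigma(x,x^iy^j)-\sigma(x^iy^j,x)=\sum_{r=0}^{j}\left[\begin{matrix} j\\ r\end{matrix}\right](1-q^{j-r})\,t_x\,t_{x^iy^r}\,t^{-1}_{x^{i+r+1}y^{j-r}},
\]
whose $r=0$ term is $(1-q^j)t_xt_{x^i}t^{-1}_{x^{i+1}y^j}$ with $1-q^j\neq 0$, the $r=j$ term cancels, and the intermediate terms involve only $y$-degrees $<j$; I checked this identity, the grading $\deg t^{-1}_{x^iy^j}=-i$, and the leading-term expansion $t^{-1}_{x^my^l}=-t_{x^my^l}/(t_{x^m}t_{x^{m+l}})+(\text{lower }y\text{-degree})$, and they are all correct, so the induction closes provided the hypothesis is phrased as membership in $\BB_H$ of all degree-zero monomials whose $y$-factors have $y$-degree $<j$ (which your lattice argument reduces to the $\Gamma_1$-generators of lower $y$-degree). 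What each approach buys: the paper's is short and uniform but rests on the freeness theorem of \cite{KM}; yours is self-contained and makes the generic cocycle values explicitly visible, at the price of the bookkeeping on $t^{-1}_{x^my^l}$ that you rightly flag as the hard part. Your fallback via integrality of $S(t_H)_{\Theta}$ over the normal (smooth) ring $\BB_H$ plus equality of fraction fields is also legitimate and closer in spirit to using the structural results of Section~\ref{subsec-generic}.
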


\begin{proof}
(a) Arrange the elements of the set $\Gamma_0 \cup \Gamma_1$ as a vector of functions
and construct its Jacobian matrix with respect to the vector of variables
\[
(t_1,\ldots, t_{x^{n-1}},t_y,\ldots ,t_{x^{n-1}y}, \ldots, t_{y^{n-1}},\ldots ,t_{x^{n-1}y^{n-1}}) \, .
\]
Up to sign, the determinant~$D$ of this Jacobian matrix is the product of $(t_x\cdots t_{x^{n-1}})^{n-1}$
and the Jacobian determinant~$J$
of the $n\times n$ minor corresponding to the vector
$(t_1,t_{x^{n-1}}t_x, t_{x^2}/(t_x)^2, \ldots, t_{x^{n-1}}/(t_x)^{n-1})$
with respect to the first $n$~variables $(t_1,\ldots, t_{x^{n-1}})$. 
Clearly, $D$ is non-zero if and only if $J$ is non-zero.
Since
\[
J= \frac{\left(1 + (-1)^{n-1}(n-1)\right)\, t_{x^{n-1}}}{(t_x)^{(n-2)(n+1)/2}} \, ,
\]
is non-zero if $n\geq 3$, we obtain the desired algebraic independence.

(b) The natural epimorphism $G(H) \to G(H_{\ab})$ is clearly an isomorphism. Since $H$ is pointed,
it follows from\,\cite[Th.\,3.9]{KM} that $S(t_H)_{\Theta}$ is a free $\BB_H$-module. Hence, 
by\,\cite[Lemma\,3.11]{KM}, 
\begin{equation}\label{S-coinvariant}
\BB_H = S(t_H)_{\Theta}^{H_{\ab}} \, ,
\end{equation}
where $S(t_H)_{\Theta}$ is an $H_{\ab}$-comodule algebra as explained in Section\,\ref{subsec-loc} and 
$S(t_H)_{\Theta}^{H_{\ab}}$ is the left coideal subalgebra of~$S(t_H)_{\Theta}$ consisting of
all $H_{\ab}$-coinvariants.
Now $H_{\ab}$ being the algebra of the group~$G(H) \cong \ZZZ/n$, the comodule algebra structure
on~$S(t_H)_{\Theta}$ is equivalent to a $\ZZZ/n$-grading.
Equality\,\eqref{S-coinvariant} means that $\BB_H$ consists of the elements of~$S(t_H)_{\Theta}$
of degree~$0 \in \ZZZ/n$. 

Let us determine the grading on~$S(t_H)_{\Theta}$.
Given an element $t_{x^iy^j}$ ($0 \leq i,j < n$), 
it follows from\,\eqref{coaction-SH}, \eqref{Taft-coproduct-tt} and $q(y) = 0$ that 
\[
\delta(t_{x^iy^j}) = t_{x^iy^j} \otimes q(x)^{i+j} \, ,
\]
meaning that $t_{x^iy^j}$ is of degree~$i+j \in \ZZZ/n$ since $q(x)$ generates~$G_{\ab} \cong \ZZZ/n$.
Similarly, $\delta(t^{-1}_{x^i}) = t^{-1}_{x^i} \otimes q(x)^{-i}$, which means that $t^{-1}_{x^i}$ is of degree~$-i$.
From this it is clear that $k[\Gamma_0, \Gamma_0^{-1}, \Gamma_1]$ is a subalgebra of~$\BB_H$.
To conclude it remains to check that the subalgebra~$S_0$ of degree~$0$ elements of~$S(t_H)_{\Theta}$
is generated by $\Gamma_0 \cup \Gamma_0^{-1} \cup \Gamma_1$. 
This is the object of the subsequent lemma.
\end{proof}

\begin{lemma}\label{lem-S0}
The algebra~$S_0$ is generated by~$\Gamma_0 \cup \Gamma_0^{-1} \cup \Gamma_1$.
\end{lemma}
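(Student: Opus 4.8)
The plan is to exploit the explicit $\ZZZ/n$-grading on $S(t_H)_{\Theta}$ obtained in the proof of Theorem~\ref{thm-BH-Taft}, under which $t_{x^iy^j}$ has degree $i+j$ and $t^{-1}_{x^i}$ has degree $-i$ in $\ZZZ/n$. Since $S_0$ is by definition the degree-zero component and the grading decomposes every element into homogeneous monomials, it suffices to show that each \emph{degree-zero monomial} of $S(t_H)_{\Theta}$ lies in $k[\Gamma_0,\Gamma_0^{-1},\Gamma_1]$. To lighten notation I write $s_i = t_{x^i}$ for the $n$ invertible group-like variables; recalling $S(t_H)_{\Theta} = S(t_H)[s_0^{-1},\dots,s_{n-1}^{-1}]$, a general monomial reads
\[
M = \prod_{i=0}^{n-1} s_i^{a_i} \cdot \prod_{j=1}^{n-1}\prod_{i=0}^{n-1} t_{x^iy^j}^{\,b_{ij}}, \qquad a_i \in \ZZZ,\ b_{ij} \in \ZZZ_{\ge 0}.
\]

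First I would settle the purely group-like case, i.e.\ monomials $N = \prod_i s_i^{a_i}$ with all $b_{ij}=0$. Writing $g_i = s_i/s_1^i$ for $i\ne 1$ and $h = s_1 s_{n-1} = t_x t_{x^{n-1}}$, the identity $h = g_{n-1}\, s_1^{n}$ gives $s_1^{n} = h\,g_{n-1}^{-1} \in k[\Gamma_0,\Gamma_0^{-1}]$. Substituting $s_i = g_i s_1^i$ for $i\ne 1$ yields $N = \big(\prod_{i\ne 1} g_i^{a_i}\big)\,s_1^{D}$ with $D = \sum_i i\,a_i$. The $\ZZZ/n$-degree of $N$ equals $D \bmod n$, so membership $N\in S_0$ forces $D\equiv 0 \pmod n$; then $s_1^{D} = (s_1^{n})^{D/n} = (h\,g_{n-1}^{-1})^{D/n}$ lies in $k[\Gamma_0,\Gamma_0^{-1}]$, and hence so does $N$. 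This shows that every degree-zero Laurent monomial in the $s_i$ alone lies in $k[\Gamma_0,\Gamma_0^{-1}]$.

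For a general monomial $M$ I would then use the elements of $\Gamma_1$ to remove the $y$-variables. For each pair $(i,j)$ with $1\le j < n$ let $k=k(i,j)$ be the unique residue in $\{0,\dots,n-1\}$ with $i+j+k\equiv 0 \pmod n$, so that $\gamma_{ij} := t_{x^iy^j}\,s_k \in \Gamma_1$ and $t_{x^iy^j} = \gamma_{ij}\,s_k^{-1}$. Replacing each factor $t_{x^iy^j}$ in this way — using only the nonnegative exponents $b_{ij}$, which is all we need since the elements of $\Gamma_1$ are not inverted — gives
\[
M = \Big(\prod_{i,j} \gamma_{ij}^{\,b_{ij}}\Big)\cdot N,
\]
where $N$ is again a Laurent monomial in the $s_i$ alone. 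Since each $\gamma_{ij}$ lies in $S_0$ and $M\in S_0$, the residual monomial $N$ is itself of degree $0$; by the previous paragraph $N\in k[\Gamma_0,\Gamma_0^{-1}]$, whence $M\in k[\Gamma_0,\Gamma_0^{-1},\Gamma_1]$, as desired.

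The argument is entirely elementary, and I do not expect a genuine obstacle; the only point needing care is the degree bookkeeping in the second step. The essential reason it works is that the congruence $D\equiv 0\pmod n$ forced by membership in $S_0$ is exactly what lets one rewrite the leftover power $s_1^{D}$ of the non-invariant variable $s_1 = t_x$ as an integer power of $s_1^{n}=t_x t_{x^{n-1}}\cdot(t_{x^{n-1}}/t_x^{n-1})^{-1}$, a product of elements of $\Gamma_0^{\pm 1}$. Note that this reasoning uses no restriction on $n$, so the lemma holds for all $n\ge 2$; the hypothesis $n\ge 3$ is needed only for the algebraic independence asserted in part~(a) of Theorem~\ref{thm-BH-Taft}.
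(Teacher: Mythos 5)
Your argument is correct and follows essentially the same two-step route as the paper's own proof: first express a degree-zero Laurent monomial in the group-like variables via the ratios $t_{x^i}/(t_x)^i$ and the relation $(t_x)^n = \bigl(t_{x^{n-1}}/(t_x)^{n-1}\bigr)^{-1}(t_x t_{x^{n-1}})$, then absorb each factor $t_{x^iy^j}$ into an element of $\Gamma_1$ by multiplying by the appropriate $t_{x^k}$. Your closing observation that the lemma itself needs no restriction on $n$ is also consistent with the paper, where the hypothesis $n\ge 3$ enters only in the algebraic-independence statement.
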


\begin{proof}
Let $R$ denote the subalgebra of $S_0$ generated by this set.
We need to show that $R = S_0$.

Let $S'_0 = k[(t_{x^i})^{\pm 1} \mid 0\leq i < n]_0$ be the subalgebra of elements  of degree~$0$
in the subalgebra of~$S(t_H)_{\Theta}$ of Laurent polynomials in the variables $(t_{x^i})^{\pm 1}$ ($0\leq i < n$). 
First consider a Laurent monomial $f  \in S'_0$. Suppose without loss of generality that
\[
f = \frac{(t_x)^p \, t_{x^{i_1}}\cdots t_{x^{i_r}}}{ t_{x^{j_1}}\cdots t_{x^{j_s}}} 
\]
with $p+i_1+\ldots + i_r = j_1+\ldots + j_s +nk$ for some integer~$k$.
Then
\[
f= (t_x)^{nk} \left( \frac{t_{x^{i_1}}}{(t_x)^{i_1}} \right)\cdots  
\left( \frac{t_{x^{i_r}}}{(t_x)^{i_r}} \right) 
\left( \frac{t_{x^{j_1}}}{(t_x)^{j_1}} \right)^{-1} \cdots  
\left( \frac{t_{x^{j_s}}}{(t_x)^{j_s}} \right)^{-1}.
\]
Then $f\in R$ in view of
\[
(t_x)^n = \left( \frac{t_{x^{n-1}}}{(t_x)^{n-1}}\right)^{-1} \left( t_x t_{x^{n-1}} \right) .
\]

Now without loss of generality consider $h = f t_{g_1}^{p_1}\cdots t_{g_r}^{p_r} \in S_0$ for some
non-zero homogeneous $f\in k[(t_{x^i})^{\pm 1} \mid 0\leq i < n]$ and 
distinct monomials $g_1,\ldots ,g_r\in \{ x^iy^j \mid 0\leq i,j < n, \,  j\neq 0 \}$ 
with $\deg(f)+\sum_{i=1}^r \,  p_i \deg(g_i) = 0$. 
Pick $t_{x^{k_1}}, \ldots, t_{x^{k_r}}$
such that $\deg(t_{g_i}t_{x^{k_i}}) = 0$. Then
\[
h= \frac{f}{(t_{x^{k_1}})^{p_1} \cdots (t_{x^{k_r}})^{p_r} }\,  (t_{g_1}t_{x^{k_1}})^{p_1}\cdots (t_{g_r}t_{x^{k_r}})^{p_r} .
\]
Since $f/((t_{x^{k_1}})^{p_1} \cdots (t_{x^{k_r}})^{p_r} )$ belongs to~$S'_0$ 
and each $t_{g_i}t_{x^{k_i}}$ belongs to~$\Gamma_1$, we have $h\in R$.
\end{proof}

\begin{rem}\label{rem-equivalence}
The referee suggested the following nice alternate proof for Theorem\,\ref{thm-BH-Taft}\,(b).
 
 From the proof of~\cite[Th.\,3.9]{KM}, it follows that $\BB_H$ can be characterized as the 
left coideal subalgebra~$B$ of~$S(t_H)_{\Theta}$ that satisfies the following mutually equivalent
conditions (see \cite[Lemma\,3.11]{KM}):
\begin{itemize}
\item[(i)]
$B = S(t_H)_{\Theta}^{H_{\ab}}$, where $S(t_H)_{\Theta}$ is an $H_{\ab}$-comodule algebra as explained above;

\item[(ii)]
$S(t_H)_{\Theta}/(B^+) \cong H_{\ab}$, where $(B^+)$ is the ideal generated by $B^+ = B \cap \Ker\eps$, 
and the set of group-like elements contained in~$B$ is closed under inverse.
\end{itemize}

By\,\eqref{S-coinvariant}, in order to prove that $\BB_H = B$, where $B= k[\Gamma_0, \Gamma_0^{-1}, \Gamma_1]$, 
it suffices to check that $B$ satisfies Property\,(ii) above. 
It easily follows from\,\eqref{Taft-coproduct-tt} that $B$ is a left coideal of~$S(t_H)_{\Theta}$.
Now $B^+$ is generated by the elements $t_x t_{x^{n-1}} - 1$, $t_{x^i}/{(t_x)^i}-1$ ($0\leq i < n$, $i\neq 1$)
and $t_{x^iy^j} t_{x^k}$ ($j\not\equiv 0$, $i+j+k \equiv 0 \pmod n$).
Consequently, in the quotient algebra $S(t_H)_{\Theta}/(B^+)$,
the elements $t_{x^i}$ are invertible, $t_{x^i} \equiv (t_x)^i$ for all~$i$, and $t_{x^iy^j} \equiv 0$, whenever $j\not\equiv 0$.
Therefore, $S(t_H)_{\Theta}/(B^+) \cong k[\ZZZ/n] \cong H_{\ab}$.
The condition on the group-like elements in~$B$ is also satisfied since these elements form a group generated 
by $\Gamma_0 \cup \Gamma_0^{-1}$, as is seen by applying to~$B$ the algebra projection
\[
S(t_H)_{\Theta} \to kG(S(t_H)_{\Theta} ) = S(t_{kG(H)})_{\Theta}
\]
given by $t_{x^i}^{\pm 1} \mapsto t_{x^i}^{\pm 1}$, $t_{x^i y^j} \mapsto 0$ ($0 \leq i < n$, $0 < j < n$); 
the latter is indeed a Hopf algebra projection which is induced from the coalgebra projection $H \to kG(H)$, $x^i y^j \mapsto \delta_{j,0}\, x^i$.
\end{rem}

\begin{rem}
When $n=2$, the elements $ t_1$, $t_x^2$, $t_xt_y$, $t_{xy}$ are algebraically independent and
by\,\cite[Cor.\,10.4]{AK} we have
\begin{equation*}
\BB_H = k[(t_1)^{\pm 1}, (t_x^2)^{\pm 1}, t_xt_y, t_{xy}] \, .
\end{equation*}
\end{rem}

\begin{prop}\label{prop-S-BH-Taft}
As a $\BB_H$-algebra, we have
\begin{equation*}
S(t_H)_{\Theta} \cong \BB_H[t]/ \left( t^n - (t_x)^n \right) \, .
\end{equation*}
\end{prop}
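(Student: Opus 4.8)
The plan is to identify $S(t_H)_{\Theta}$ with a free $\BB_H$-module of rank $n$ having the powers $1, t_x, \dots, t_x^{n-1}$ as a basis; the asserted presentation then drops out at once. First I would record two facts about the element $t_x$. It is a unit of $S(t_H)_{\Theta}$, being the $t$-symbol attached to the group-like element $x$ and hence inverted in the localization; and for the $\ZZZ/n$-grading used in the proof of Theorem~\ref{thm-BH-Taft} it is homogeneous of degree~$1$, so that $1/t_x = t_x^{-1}$ is homogeneous of degree~$-1$.

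Writing $S(t_H)_{\Theta} = \bigoplus_{d\in\ZZZ/n} S_d$ for this grading, recall from~\eqref{S-coinvariant} that $\BB_H = S_0$. Multiplication by the homogeneous unit $t_x^{d}$ carries $S_0$ into $S_d$, while multiplication by $t_x^{-d}$ carries $S_d$ back into $S_0$; these two maps are mutually inverse, so $S_d = t_x^{d}\,\BB_H$ for every~$d$. As $d$ ranges over the $n$ residues $0,1,\dots,n-1$, this yields the decomposition $S(t_H)_{\Theta} = \bigoplus_{d=0}^{n-1} t_x^{d}\,\BB_H$, that is, $S(t_H)_{\Theta}$ is free of rank $n$ over $\BB_H$ on $1, t_x, \dots, t_x^{n-1}$.

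To conclude I would introduce the $\BB_H$-algebra map $\varphi : \BB_H[t] \to S(t_H)_{\Theta}$ determined by $\varphi(t) = t_x$, which is surjective by the previous paragraph. Since $t_x$ has degree~$1$, the element $(t_x)^n$ has degree~$0$ and so lies in $\BB_H = S_0$; hence $t^n - (t_x)^n \in \Ker\varphi$, and $\varphi$ factors through a surjection $\bar\varphi : \BB_H[t]/(t^n-(t_x)^n) \to S(t_H)_{\Theta}$. The polynomial $t^n - (t_x)^n$ being monic of degree~$n$, the source of $\bar\varphi$ is itself a free $\BB_H$-module of rank~$n$ with basis $1, t, \dots, t^{n-1}$, and $\bar\varphi$ sends this basis bijectively onto the basis $1, t_x, \dots, t_x^{n-1}$ found above. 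A $\BB_H$-module map carrying one basis onto another is an isomorphism, and the claim follows. The only genuine point requiring care is the equality $S_d = t_x^{d}\,\BB_H$, where the invertibility of the degree-one element~$t_x$ is essential; once that is established the remainder is purely formal.
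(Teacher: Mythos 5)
Your proof is correct, and it reaches the conclusion by a somewhat different route than the paper, even though both arguments hinge on the same map $t \mapsto t_x$ and on the identification $\BB_H = S_0$ coming from \eqref{S-coinvariant}. Where you differ is in how surjectivity and injectivity are handled. The paper proves surjectivity by exhibiting an explicit preimage of each generator $t_{x^iy^j}$, using the specific elements $t_{x^i}/(t_x)^i$ of $\Gamma_0$ and $t_{x^{-i-j}}t_{x^iy^j}$ of $\Gamma_1$, and then merely asserts that the linear independence of $1, t_x, \ldots, (t_x)^{n-1}$ over $\BB_H$ (hence injectivity) is "just as easy." You instead observe that $t_x$ is a homogeneous unit of degree $1$ for the $\ZZZ/n$-grading, deduce $S_d = t_x^d\,\BB_H$ for every $d$, and obtain the free rank-$n$ decomposition $S(t_H)_{\Theta} = \bigoplus_{d=0}^{n-1} t_x^d\,\BB_H$ in one stroke; surjectivity and injectivity of $\bar\varphi$ then both follow from the basis-to-basis observation. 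Your argument is more structural and has the advantage of establishing directly the freeness of rank $n$, which the paper only records as a corollary after the proposition; the paper's computation, on the other hand, makes visible exactly which generators of $\BB_H$ are used to express each $t_{x^iy^j}$, information that is reused elsewhere in the text. Both proofs are complete and valid.
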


Therefore, $S(t_H)_{\Theta}$ is a finite \'etale (hence integral) extension of~$\BB_H$ and
it is a free $\BB_H$-module of rank~$n$.

\begin{proof}
Define a $\BB_H$-algebra map $f: \BB_H[t]/(t^n - (t_x)^n) \to S(t_H)_{\Theta}$ by sending $t$ to~$t_x$.
It is obviously well defined.
To prove the surjectivity of~$f$, it is enough to show that each generator $t_{x^i y^j}$ is in its image.
If $j=0$, since $t_{x^i}/(t_x)^i$ belongs to~$\BB_H$, we can write
\[
t_{x^i} = \frac{t_{x^i}}{(t_x)^i} \, (t_x)^i = f\left(\frac{t_{x^i}}{(t_x)^i} \, t^i \right)  .
\]
Similarly, if $j \not\equiv 0 \pmod n$, then
\begin{eqnarray*}
t_{x^iy^j} & = & (t_{x^{-i-j}} t_{x^iy^j} ) \left(\frac{t_{x^{-i-j}}}{(t_x)^{-i-j}}\right)^{-1}  (t_x)^{i+j} \\
&  = & f\left( (t_{x^{-i-j}} t_{x^iy^j}) \left(\frac{t_{x^{-i-j}}}{(t_x)^{-i-j}}\right)^{-1}  t^{i+j} \right)  .
\end{eqnarray*}

To prove the injectivity of~$f$, it suffices to check that $1, t_x, (t_x)^2, \ldots, (t_x)^{n-1}$
are linearly independent over~$\BB_H$, which is just as easy.
\end{proof}

\subsection{The universal comodule algebra}\label{ssec-U-Taft}

The tensor algebra~$T(X_H)$ is the free algebra on the indeterminates $X_{x^iy ^j}$ ($0\leq i,j < n$).
We will use the same notation for the image of~$X_{x^iy ^j}$ 
in~$\UU_H = T(X_H)/I_H$.

Recall from Section\,\ref{ssec-detect}
that the comodule algebra map $\mu_0 : T(X_H) \to S(t_H) \otimes H$ defined by\,\eqref{mu-triv}
induces an embedding of~$\UU_H$ into $S(t_H) \otimes H$ and that
an element~$P$ belongs to the the subalgebra~$\VV_H$ of coinvariants of~$\UU_H$
if and only if~$\mu_0(P) \in S(t_H) \otimes 1$.
Moreover, since the center of~$H$ is one-dimensional, 
$\VV_H$ coincides with the center of~$\UU_H$.

Now, 
$\mu_0(X_1) = t_1$, hence $X_1 \in \VV_H$, and
\begin{equation}\label{mu-X}
\mu_0(X_{x^i}) = t_{x^i}\, x^i 
\end{equation}
for $0 < i < n$.
Therefore, $\mu_0(X_{x^i}^n) = (t_{x^i})^n \, x^{in} = (t_{x^i})^n$, 
which implies that $X_{x^i}^n$ belongs to~$\VV_H$ for $0 < i < n$.

Let $\VV'_H$ (resp.\ $\UU'_H$) be the localization of~$\VV_H$ (resp.\ of~$\UU_H$) obtained by inverting the 
central elements $X_1$, $X_{x^i}^n$ for all $i=1, \ldots, n-1$.

Since the images $\mu_0(X_1)$ and $\mu_0(X_{x^i}^n)$ are invertible in~$S(t_H)_{\Theta}$, 
the embedding~$\mu_0$ extends to an embedding $\mu_0: \UU'_H \to S(t_H)_{\Theta} \otimes H$.
The subalgebra~$\VV'_H$ consists of the elements of~$\UU'_H$ whose images belong to~$S(t_H)_{\Theta} \otimes 1$.

For simplicity, from now on we identify each element of~$\UU'_H$ with its image in $S(t_H)_{\Theta} \otimes H$.

The following result states that the trivial cocycle of a Taft algebra is nice
in the sense of Section\,\ref{subsec-relating}.

\begin{prop}\label{Taft-nice}
We have $\VV'_H = \BB_H$. 
\end{prop}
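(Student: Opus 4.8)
The plan is to prove the two inclusions $\VV'_H \subseteq \BB_H$ and $\BB_H \subseteq \VV'_H$ separately. The first is almost formal. Section~\ref{subsec-relating} already gives an embedding $\VV_H \hookrightarrow \BB_H$ (the restriction of $\mu_0$ to coinvariants), and under it the central elements $X_1$ and $X_{x^i}^n$ that we inverted map to $t_1$ and $(t_{x^i})^n$. Both are units of $\BB_H$: indeed $t_1 \in \Gamma_0$, while
\[
(t_{x^i})^n = \left(\frac{t_{x^i}}{(t_x)^i}\right)^{\! n} \big((t_x)^n\big)^i
\]
is a product of units of $\BB_H$ coming from $\Gamma_0 \cup \Gamma_0^{-1}$ together with the identity $(t_x)^n = (t_{x^{n-1}}/(t_x)^{n-1})^{-1}(t_x t_{x^{n-1}})$ used in the proof of Lemma~\ref{lem-S0}. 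Hence the embedding extends to the localization, yielding $\VV'_H \subseteq \BB_H$.

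For the reverse inclusion it suffices to realize the generators $\Gamma_0 \cup \Gamma_0^{-1} \cup \Gamma_1$ inside $\VV'_H$. The set $\Gamma_0$ is immediate: $t_x t_{x^{n-1}} = \mu_0(X_x X_{x^{n-1}})$ is coinvariant, and for $i \neq 1$ one has $t_{x^i}/(t_x)^i = \mu_0(X_{x^i} X_x^{\,n-i} X_x^{-n}) \in \VV'_H$. Moreover each generator $g$ of $\Gamma_0$ satisfies $g^n \in \VV'_H$ and is a unit there (its $n$-th power being a product of the inverted elements $(t_{x^m})^{\pm n}$), so $g^{-1} \in \VV'_H$ and $\Gamma_0^{-1} \subseteq \VV'_H$ as well. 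The real work lies in the elements $\gamma_{i,j} := t_{x^i y^j}\, t_{x^{n-i-j}}$ of $\Gamma_1$, which I would treat by induction on $j$, the case $j=0$ (the products $t_{x^i} t_{x^{n-i}}$) being covered by $\Gamma_0$.

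For fixed $(i,j)$ I introduce the $j+1$ auxiliary elements $P_b = X_{x^i y^b}\, X_{x^{n-i-j} y^{\,j-b}} \in \UU'_H$ for $0 \le b \le j$. Expanding $\mu_0(P_b)$ by means of~\eqref{Taft-coproduct-tt} and the relation $yx = qxy$ shows that $\mu_0(P_b)$ has a \emph{unique} coinvariant term, namely $t_{x^i y^b}\, t_{x^{n-i-j} y^{\,j-b}} \otimes 1$, every other term carrying a positive power of $y$ in the $H$-slot. For $0 < b < j$ this leading term factors as $\gamma_{i,b}\,\gamma_{n-i-j,\,j-b}$ times a unit of $\VV'_H$, hence lies in $\VV'_H$ by the induction hypothesis; the two extreme values $b=0$ and $b=j$ produce $\gamma_{n-i-j,j}$ and the target $\gamma_{i,j}$. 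Thus, once the non-coinvariant parts are removed, any combination $\sum_b \lambda_b\, \mu_0(P_b)$ delivers a coinvariant element that, modulo inductively known quantities, involves only $\gamma_{i,j}$ and $\gamma_{n-i-j,j}$.

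Killing the non-coinvariant remainder is the step I expect to be the main obstacle. These remainder terms carry various powers of $y$ in the $H$-slot, and cancelling them forces the $\lambda_b$ to satisfy a linear system whose coefficients are $q$-binomial numbers; for higher $j$ one must also enlarge the family by multi-factor products $X_{x^{a_1} y^{b_1}} \cdots X_{x^{a_\ell} y^{b_\ell}}$ with $b_1 + \cdots + b_\ell = j$ and $\sum_l (a_l + b_l) \equiv 0 \pmod n$, chosen so that their (again unique) coinvariant tops are inductively known, in order to supply enough equations. Filtering by the $y$-degree in $H$ turns the cancellation into a triangular system, and the crux is to show that the governing $q$-binomial matrices are nonsingular (a $q$-Vandermonde type identity; I have checked $j=1,2$, where a single combination of the $P_b$ already works). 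Carrying this out yields a relation $\gamma_{i,j} + \kappa\, \gamma_{n-i-j,j} \in \VV'_H$ with $\kappa$ a power of $q$. Applying it again after the involution $i \mapsto n-i-j$ and combining the two relations gives $(1 - q^{-j})\,\gamma_{i,j} \in \VV'_H$; since $0 < j < n$ and $q$ is a primitive $n$-th root of unity, $q^{-j} \neq 1$, so $\gamma_{i,j} \in \VV'_H$ (when $i = n-i-j$ a single relation already produces a nonzero multiple of $\gamma_{i,j}$, again nonzero by primitivity). This closes the induction, gives $\Gamma_1 \subseteq \VV'_H$, and completes the proof that $\VV'_H = \BB_H$.
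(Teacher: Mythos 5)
Your first inclusion $\VV'_H \subseteq \BB_H$ and your treatment of $\Gamma_0$ and $\Gamma_0^{-1}$ are fine and essentially agree with the paper. The gap is in the $\Gamma_1$ step, and it is exactly where you say you expect it: the solvability of the cancellation system is asserted, not proved. For fixed $j$ your quadratic family $P_b = X_{x^iy^b}X_{x^{n-i-j}y^{j-b}}$ gives $j+1$ unknowns $\lambda_b$, while requiring $\sum_b \lambda_b\,\mu_0(P_b)$ to be coinvariant imposes one linear condition for each pair $(r,s)$ with $r+s<j$ (the monomials $t_{x^iy^r}t_{x^{n-i-j}y^s}$ being algebraically independent), i.e.\ $j(j+1)/2$ homogeneous equations. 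This system is overdetermined as soon as $j\geq 2$; its consistency is a nontrivial identity among $q$-binomials that you have only verified for $j\leq 2$, and nothing in your argument guarantees it for general $j$ --- indeed there is no a priori reason the coinvariant element $t_{x^iy^j}t_{x^{n-i-j}}$ should be reachable by a linear combination of degree-two monomials in the $X$'s at all, which is why you are forced to invoke an unspecified enlarged family of multi-factor products. The concluding step (deducing $\gamma_{i,j}\in\VV'_H$ from two relations with determinant $1-q^{-j}$) also depends on knowing that $\lambda_0$ and $\lambda_j$ are both nonzero, which again is not established. As it stands the induction does not close.

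The idea you are missing, and which the paper uses to bypass all of this, is that $y$ itself lies in the $\VV'_H$-subalgebra $A$ generated by $X_x$ and $X_y$: from $X_y=t_1y+t_yx$ one gets $X_yX_x-X_xX_y=(q-1)X_1X_x\,y$, so after inverting $X_1$ and $X_x^n$ the element $y$ belongs to $A$. With $y$ available, one does not need to cancel the non-coinvariant terms of $X_{x^iy^j}$ by a clever linear combination; one simply \emph{subtracts} them. Writing $X_{x^iy^j}=t_{x^iy^j}x^{i+j}+\psi$ with $\psi=\sum_{r<j}\bigl[\begin{smallmatrix} j\\ r\end{smallmatrix}\bigr]t_{x^iy^r}x^{i+r}y^{j-r}$, the induction hypothesis (strengthened to include the statements that $t_{x^iy^r}x^{i+r}\in A$ and $X_{x^iy^r}\in A$) gives $\psi\in A$, and then $(X_{x^iy^j}-\psi)X_x^{n-i-j}=t_{x^iy^j}t_{x^{n-i-j}}$ lands in $\VV'_H$ directly. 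I recommend you replace your cancellation scheme by this three-part simultaneous induction (Lemma~\ref{Taft-lem} of the paper); alternatively, if you want to keep your scheme, you must actually prove the nonsingularity/consistency statement for all $j$, which is the entire content of the problem.
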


We need the following lemma to prove the proposition.

\begin{lemma}\label{Taft-lem}
Let $A$ be the $\VV'_H$-subalgebra of~$\UU'_H$ generated by~$X_x$ and~$X_y$. 
For all $i,j = 0, 1, \ldots, n-1$, 
\begin{enumerate}
\item[(a)]
the elements $t_{x^iy^j} t_{x^{n-i-j}}$ belong to~$\VV^{\prime}_H$,

\item[(b)]
the elements $X_{x^iy^j}$ belong to~$A$, and

\item[(c)]
the elements $t_{x^iy^j} \, x^{i+j}$ belong to~$A$.
\end{enumerate}
\end{lemma}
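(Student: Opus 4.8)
The plan is to carry out everything inside $S(t_H)_{\Theta}\otimes H$, using the embedding $\mu_0:\UU'_H\hookrightarrow S(t_H)_{\Theta}\otimes H$ and the fact that an element of~$\UU'_H$ is coinvariant, hence lies in~$\VV'_H$, exactly when its image lies in $S(t_H)_{\Theta}\otimes 1$. By\,\eqref{mu-triv} and\,\eqref{coproduct} the generators become $X_x=t_x x$ and $X_y=t_1 y+t_y x$. Since $X_x^n=(t_x)^n$ is one of the inverted central elements, $X_x$ is a unit of~$\UU'_H$, and in fact $X_x^{-1}=(t_x)^{-n}X_x^{n-1}\in A$, so $X_x$ is a unit of~$A$. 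The key device is conjugation by~$X_x$: because $yx=qxy$ and $x^n=1$, one checks that $X_x^{-1}(c\,x^a y^b)X_x=q^{b}\,c\,x^a y^b$ for $c\in S(t_H)_{\Theta}$, i.e.\ conjugation scales a monomial of $y$-degree~$b$ by~$q^{b}$. As $q$ is a primitive $n$-th root of unity, the operators $\pi_b=\frac1n\sum_{k=0}^{n-1}q^{-bk}\,\bigl(X_x^{-1}(\,\cdot\,)X_x\bigr)^k$ are the projections onto the $y$-homogeneous components, and each $\pi_b$ maps $\UU'_H$ into~$\UU'_H$ and $A$ into~$A$.

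With this in hand, (a) comes almost for free. Using\,\eqref{Taft-coproduct} I would compute
\[
\mu_0\!\left(X_{x^iy^j}X_{x^{n-i-j}}\right)=\sum_{r=0}^{j}\left[\begin{smallmatrix} j\\ r\end{smallmatrix}\right]q^{(j-r)(n-i-j)}\,t_{x^iy^r}\,t_{x^{n-i-j}}\,x^{\,n+r-j}\,y^{\,j-r}\,,
\]
whose only term of $y$-degree~$0$ is the $r=j$ term, equal to $t_{x^iy^j}t_{x^{n-i-j}}$. Hence $\pi_0\!\left(X_{x^iy^j}X_{x^{n-i-j}}\right)$ lies in~$\UU'_H$, maps into $S(t_H)_{\Theta}\otimes 1$, and therefore belongs to~$\VV'_H$; this is exactly (a), and it needs no induction.

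I would then prove (b) and (c) together by induction on~$j$. For $j=0$ the element $X_{x^i}X_x^{-i}$ has image $t_{x^i}(t_x)^{-i}$, so it is coinvariant and $t_{x^i}/(t_x)^i\in\VV'_H$; consequently $X_{x^i}=(t_{x^i}/(t_x)^i)\,X_x^{i}\in A$, which is both (b) and (c) in this case and shows that every $X_{x^m}$ is a unit of~$A$. For the inductive step, (c) follows from (a) through the bridge identity $\bigl(t_{x^iy^j}t_{x^{n-i-j}}\bigr)\,X_{x^{n-i-j}}^{-1}=t_{x^iy^j}\,x^{i+j}$, a one-line check in $S(t_H)_{\Theta}\otimes H$ in which the right-hand factor is a unit of~$A$. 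Finally, for (b) I would decompose $X_{x^iy^j}=\sum_{b=0}^{j}\pi_b(X_{x^iy^j})$: the $b=0$ component is the (c)-element just obtained, while for $b\ge 1$ the component equals $\left[\begin{smallmatrix} j\\ b\end{smallmatrix}\right]t_1^{-b}\bigl(t_{x^iy^{j-b}}x^{i+j-b}\bigr)(t_1 y)^{b}$, which lies in~$A$ because $t_{x^iy^{j-b}}x^{i+j-b}\in A$ by (c) at the smaller index $(i,j-b)$ and $t_1 y=\pi_1(X_y)\in A$.

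The main obstacle is conceptual rather than computational: the fresh variable $t_{x^iy^j}$ occurs only linearly, so no product of the generators $X_x,X_y$ can manufacture it directly, and $t_{x^iy^j}$ is not itself coinvariant. The conjugation/projection mechanism is what resolves this --- it both isolates the coinvariant combination $t_{x^iy^j}t_{x^{n-i-j}}$ needed for (a) and lets one reassemble $X_{x^iy^j}$ from lower-index data for (b). The one point to set up with care is therefore that conjugation by~$X_x$ is an inner automorphism of finite order whose action on $y$-homogeneous components has the distinct eigenvalues $q^{0},\dots,q^{n-1}$, which is exactly what makes the projectors~$\pi_b$ available over~$k$.
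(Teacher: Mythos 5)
Your proof is correct, but it takes a genuinely different route from the paper's. The paper proves all three parts by a single simultaneous induction on $j$: it first extracts $y\in A$ from the commutator identity $X_yX_x-X_xX_y=(q-1)\,X_1X_x\,y$, then at each step writes $X_{x^iy^j}=t_{x^iy^j}x^{i+j}+\psi$ with $\psi$ built from lower-index data and $y$, and obtains (a) by multiplying $X_{x^iy^j}-\psi$ by $X_x^{n-i-j}$ to land in $S(t_H)_\Theta\otimes 1$; so in the paper (a) is genuinely part of the induction. Your mechanism of conjugation by the unit $X_x$ and the eigenprojectors $\pi_b$ (available since $q$ is a primitive $n$-th root of unity and $\operatorname{char}k=0$) replaces both the commutator trick (your $t_1y=\pi_1(X_y)$ is its repackaging) and the subtraction of $\psi$: it yields (a) uniformly with no induction, (c) then follows for all $(i,j)$ at once via the bridge identity, and only (b) retains a short induction on $j$ through the decomposition $X_{x^iy^j}=\sum_b\pi_b(X_{x^iy^j})$. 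The trade-off is that your argument leans on the extra structural input that conjugation by $X_x$ is a semisimple inner automorphism with distinct eigenvalues $q^0,\dots,q^{n-1}$ on $y$-homogeneous components (hence needs $n$ invertible in $k$ and the root of unity, both granted here), whereas the paper's argument is more hands-on and self-contained; in exchange you get a cleaner logical structure in which (a) and (c) are decoupled from the induction. All the computations you rely on check out: $X_x^{-1}=(t_x)^{-n}X_x^{n-1}\in A$, the only $y$-degree-zero term of $\mu_0(X_{x^iy^j}X_{x^{n-i-j}})$ is $t_{x^iy^j}t_{x^{n-i-j}}\otimes 1$, and the $b\ge 1$ components of $X_{x^iy^j}$ reduce to (c) at index $(i,j-b)$ together with $t_1y\in A$.
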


\begin{proof}
We shall prove simultaneously the three assertions by induction on~$j$.

Let us start with the case $j=0$. 
Under the identification mentioned above, \eqref{mu-X} becomes $X_{x^i} = t_{x^i}\, x^i$. 
Hence,
\begin{equation*}
X_{x^i} X_{x^{n-i}} = t_{x^i}t_{x^{n-i}}\, x^i x^{n-i} = t_{x^i}t_{x^{n-i}} \, .
\end{equation*}
It follows that $t_{x^i}t_{x^{n-i}}$ belongs to~$\VV_H$.
This proves Item\,(a).

Now, 
\begin{equation*}
X_{x^i} = t_{x^i}\, x^i = \frac{t_{x^i}}{(t_x)^i} \, (t_{x})^i\, x^i = \frac{t_{x^i}}{(t_x)^i} \, X_x^i \, .
\end{equation*}
Therefore, in order to prove Items\,(b) and\,(c) it suffices to check that $
t_{x^i}/(t_x)^i$ belongs to~$\VV'_H$.
The latter follows from the equality
\begin{equation*}
\frac{t_{x^i}}{(t_x)^i} \, X_x^n = X_{x^i} X_x^{n-i} 
\end{equation*}
and the fact that $X_x^n$ is invertible in~$\VV'_H$.

Before we proceed with the induction, we make the following observations.
From $X_y = t_1 y + t_y x$ it follows that
\[
X_y X_x - X_xX_y = (q-1) \, t_1 t_x \, xy = (q-1) \, X_1 X_x \, y \, .
\] 
Hence, 
\[
(q-1) \, X_1 X_x^n \, y = X_x^{n-1} (X_y X_x - X_xX_y)  \, .
\] 
Since $X_1$ and $X_x^n$ are invertible in~$\VV'_H$, it follows that~$y$ belongs to~$A$.

Assume now that the assertions of the lemma hold until~$j-1$.
Then by\,\eqref{Taft-coproduct} we have
\[
X_{x^iy^j} = \sum_{r=0}^j \, 
\left[ \begin{matrix}
j \\
r
\end{matrix}\right]
t_{x^iy^r}  \, x^{i+r}y^{j-r}
=  t_{x^iy^j} \, x^{i+j} + \psi \, ,
\]
where 
\[
\psi = \sum_{r=0}^{j-1} \, 
\left[ \begin{matrix}
j \\
r
\end{matrix}\right]
t_{x^iy^r}  \, x^{i+r}y^{j-r} \, .
\]
By the induction hypothesis, $t_{x^iy^r} \, x^{i+r} \in A$ for $0\leq r \leq j-1$.
Since $y\in A$ as observed above, we obtain $\psi \in A$.
Therefore, $X_{x^iy^j} -\psi \in U^{\prime}_H$. 
Now,
\[
(X_{x^iy^j} -\psi) \, X_x^{n-i-j} = t_{x^iy^j} \,  t_{x^{n-i-j}} 
\in U^{\prime}_H \cap S(t_H)_{\Theta}\otimes 1 =\VV^{\prime}_H \, .
\] 
We have thus proved that $t_{x^iy^j} \, t_{x^{n-i-j}} \in \VV^{\prime}_H$. 
Moreover, 
\[
t_{x^iy^j} \, x^{i+j} = (X_{x^iy^j} -\psi) = \frac{1}{X_x^n} \, (t_{x^iy^j} \,  t_{x^{n-i-j}} ) \, X_x^{i+j} \, 
\]
which proves that $t_{x^iy^j} \, x^{i+j} $ belongs to~$A$.
Since $\psi \in A$ as already seen, we have $X_{x^iy^j}\in A$.
This proves the assertions of the lemma.
\end{proof}

\begin{proof}[Proof of Proposition~\ref{Taft-nice}]
In view of Theorem\,\ref{thm-BH-Taft} it suffices to ckeck that the sets
$\Gamma_0$, $\Gamma_0^{-1}$ and~$\Gamma_1$ introduced in Section\,\ref{ssec-BH-Taft}
are subsets of~$\VV'_H$.

The elements of~$\Gamma_1$ belong to~$\VV'_H$ by Lemma\,\ref{Taft-lem}\,(a).
In the course of the proof of that lemma, we have already checked that 
the elements of~$\Gamma_0$ belong to~$\VV'_H$.
For the elements of~$\Gamma_0^{-1}$, we proceed as follows. First, we have 
\begin{equation*}
X_x^n X_{x^{n-1}}^n \, (t_x t_{x^{n-1}})^{-1} = X_x^{n-1} X_{x^{n-1}}^{n-1} \, .
\end{equation*}
Since $X_x^n$ and $X_{x^{n-1}}^n$ are invertible in~$\VV'_H$, 
we deduce that $(t_x t_{x^{n-1}})^{-1}$ belongs to~$\VV'_H$.

Next, for $i=0, \ldots, n-1$,
\begin{equation*}
X_{x^i}^n \, \left(\frac{t_{x^i}}{(t_x)^i}\right)^{-1} = X_{x^i}^{n-1} X_x^i \, .
\end{equation*}
Since $X_{x^i}^n$ is invertible in~$\VV'_H$, we also conclude that $(t_{x^i}/(t_x)^i)^{-1}$ belongs to~$\VV'_H$.
\end{proof}

We finally determine $\UU'_H$ as a $\BB_H$-algebra.

\begin{theorem}\label{U=Taft}
There is an isomorphism of algebras
\begin{equation*}
\UU'_H \cong 
\BB_H \left\langle\, \xi, \eta \, |\, \xi^n = X_x^n \, ,\;  \eta^n = 0 \, , \;  \eta\xi - q \xi \eta = 0 \,\right\rangle .
\end{equation*}
\end{theorem}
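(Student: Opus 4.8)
The plan is to exhibit the isomorphism explicitly. Write $E = \BB_H\langle \xi,\eta \mid \xi^n = X_x^n,\ \eta^n = 0,\ \eta\xi - q\xi\eta = 0\rangle$ for the algebra on the right-hand side, and define a $\BB_H$-algebra map $\phi\colon E \to \UU'_H$ by $\phi(\xi) = X_x$ and $\phi(\eta) = 1\otimes y$, where $1\otimes y$ is the element of $S(t_H)_\Theta\otimes H$ that was shown to lie in~$\UU'_H$ in the course of proving Lemma~\ref{Taft-lem}. First I would check that $\phi$ is well defined, i.e.\ that $X_x$ and $1\otimes y$ satisfy the three defining relations of~$E$. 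Using $\mu_0(X_x) = t_x\otimes x$ one gets $X_x^n = t_x^n\otimes x^n = t_x^n \in \BB_H$, which is exactly the element denoted $X_x^n$ in the presentation; moreover $(1\otimes y)^n = 1\otimes y^n = 0$ since $y^n = 0$ in~$H$, and
\[
(1\otimes y)(t_x\otimes x) - q\,(t_x\otimes x)(1\otimes y) = t_x\otimes (yx - q\,xy) = 0
\]
because $yx = q\,xy$ in~$H$. Hence $\phi$ is a well-defined homomorphism of $\BB_H$-algebras (note that $\eta$ has to be sent to $1\otimes y$, not to $X_y$, since it is $1\otimes y$ and not $X_y$ that $q$-commutes with~$X_x$).

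Next I would establish surjectivity. By Lemma~\ref{Taft-lem} the algebra $\UU'_H$ coincides with the $\VV'_H$-subalgebra~$A$ generated by~$X_x$ and~$X_y$, and $\VV'_H = \BB_H$ by Proposition~\ref{Taft-nice}. Since $X_y = t_1\,(1\otimes y) + (t_y/t_x)\,X_x$ with $t_1 = X_1 \in \BB_H$ and $t_y/t_x = (t_y t_{x^{n-1}})(t_x t_{x^{n-1}})^{-1} \in \BB_H$ (here $t_y t_{x^{n-1}}\in\Gamma_1$ and $t_x t_{x^{n-1}}\in\Gamma_0$), the algebra $A$ is equally generated over~$\BB_H$ by~$X_x$ and~$1\otimes y$. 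Therefore $\phi$ is onto.

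Finally I would prove injectivity by a rank count. Using the relation $\eta\xi = q\,\xi\eta$ to reorder any word, $\xi^n = X_x^n \in \BB_H^{\times}$ to reduce the $\xi$-exponent modulo~$n$, and $\eta^n = 0$ to bound the $\eta$-exponent, every element of~$E$ is a $\BB_H$-combination of the $n^2$ monomials $\xi^i\eta^j$ with $0\le i,j < n$; thus these monomials span~$E$ as a $\BB_H$-module. Their images $\phi(\xi^i\eta^j) = X_x^i\,(1\otimes y)^j$ have $\mu_0$-image $t_x^i\otimes x^iy^j$, and since $\{x^iy^j\}_{0\le i,j<n}$ is a $k$-basis of~$H$ and $S(t_H)_\Theta$ is a domain with $t_x^i\neq 0$, these images are linearly independent over~$\BB_H$. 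A $\BB_H$-module surjection from a module spanned by $n^2$ elements onto a module admitting those images as a basis must carry the spanning set to a basis and be bijective; hence $\phi$ is an isomorphism (and incidentally $\UU'_H$ is free of rank~$n^2$ over~$\BB_H$, in accordance with the cleft Galois picture of Section~\ref{subsec-relating}).

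The only genuinely substantive input is the surjectivity, which is precisely the content of Lemma~\ref{Taft-lem}; once that is granted, well-definedness is a three-line relation check and injectivity reduces to the linear independence of the $x^iy^j$ in~$H$. I therefore expect no real obstacle here, the most delicate bookkeeping being the reduction of $E$ to the $n^2$ spanning monomials, which must match the rank of~$\UU'_H$ exactly for the argument to close.
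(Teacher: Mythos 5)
Your proof is correct and follows essentially the same route as the paper's: the same explicit generator-by-generator map (your $\phi(\eta)=1\otimes y$ differs from the paper's $f(\eta)=X_y-(t_y/t_x)\,X_x=t_1\otimes y$ only by the central unit $X_1$), surjectivity via Lemma~\ref{Taft-lem}\,(b) together with Proposition~\ref{Taft-nice}, and injectivity from the linear independence of the $x^iy^j$ in~$H$ applied to the $n^2$ spanning monomials $\xi^i\eta^j$. There is no gap.
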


\begin{proof}
First recall from Proposition\,\ref{Taft-nice} that $\BB_H = \VV'_H$.
Next, let $R$ be the $\VV'_H$-algebra defined by the RHS of the isomorphism in the theorem.
Set 
\begin{equation*}
f(\xi) = X_x \quad\text{and}\quad f(\eta) = X_y - \frac{t_y}{t_x} \, X_x \, .
\end{equation*}
Observe that, since the $\ZZZ/n$-degree of~$t_y/t_x$ is~$0$, this element belongs to $\BB_H = \VV'_H$.
We have $f(\xi) = t_x \, x$ and 
\begin{equation*}
f(\eta) = (t_1 \, y + t_y \, x) - \frac{t_y}{t_x} \, t_x \, x = t_1 y \, .
\end{equation*}
From this and the defining relations of~$H$, it follows that 
$f$ defines an algebra map $R \to \UU'_H$.

Let us now show that this map is injective.
It is clear that any element $\omega \in R$ can be written uniquely as
$\omega = \sum_{i,j=0}^{n-1} \, v_{i,j} \, \xi^i \eta^j$, where $v_{i,j} \in \VV'_H$.
Now, $f(\omega) = \sum_{i,j=0}^{n-1} \, v_{i,j} \, t_x^i t_1^j \, x^i y^j$. 
Since the elements $x^i y^j$ are linearly independent in~$H$, if $f(\omega) = 0$, then
$v_{i,j} \, t_x^i t_1^j  = 0$, hence $v_{i,j} = 0$ for all $i,j$, which implies~$f=0$.

On the other hand, $f$ is surjective by Lemma\,\ref{Taft-lem}\,(b) since
$\UU'_H$ is generated by the elements $X_{x^iy^j}$ ($i,j = 0, 1, \ldots, n-1$).
We have thus proved that $f$ is an isomorphism.
\end{proof}

\section{The Hopf algebras $E(n)$}\label{sec-En}

We now deal with the Hopf algebras~$E(n)$,
as defined for instance in\,\cite{BDG},\,\cite[Example\,2.2]{BC},\,\cite{PvO}.
As in the previous section, 
we shall determine the corresponding generic base algebra and universal comodule algebra,
and show that the trivial two-cocycle is nice.

\subsection{Definition}

Fix an integer $n\geq 1$. 
The algebra~$H= E(n)$ is generated by $n+1$ elements $x$, $y_1, \ldots, y_n$
subject to the relations
\begin{equation*}
x^2=1 \, , \quad y_i^2=0 \, , \quad y_ix+xy_i=0 \, ,\quad y_iy_j+y_jy_i=0 
\end{equation*}
for all $i,j= 1, \ldots, n$.
When $n=1$, then $H$ is the Sweedler algebra.

As a vector space, $H$ is of dimension~$2^{n+1}$ with the following basis.
For any subset $I \subset\{1, 2, \ldots, n\}$, set
$y_I = y_{i_1}  \cdots y_{i_r}$ if $I = \{i_1  < \cdots < i_r\}$.
By convention, $y_I = 1$ if $I = \emptyset$.
Then 
\[
\left\{y_I, xy_I \mid I \subset\{1, 2, \ldots, n\} \right\}
\] 
is a basis of~$H$.
The elements $y_1, \ldots, y_n$ generate a subalgebra that is isomorphic to an exterior (or Grassmann) algebra.

It can checked that the center $Z(H)$ of~$H$ 
has a basis consisting of all elements $y_I$ such that $|I|$ is even. 
It is of dimension~$2^{n-1}$.

The algebra~$H$ is a Hopf algebra with coproduct~$\Delta$ and counit~$\eps$ defined by
\begin{equation}\label{En-coproduct}
\Delta(x) = x \otimes x\, , \qquad  \Delta(y_i) = 1 \otimes y_i + y_i \otimes x\, ,
\end{equation}
$\eps(x) = 1$ and $\eps(y_i) = 0$ for all $i= 1, \ldots, n$.

On the basis elements, the coproduct has the following form:
\begin{equation}\label{coproduct-yI}
\Delta (y_I)= \sum_{J\subset I}\, (-1)^{m_J} \, y_J \otimes x^{|J|} \, y_{J^c_I}
\end{equation}
and 
\begin{equation}\label{coproduct-xyI}
\Delta (xy_I)= \sum_{J\subset I}\,  (-1)^{m_J} \, x y_J \otimes x^{|J|+1} \, y_{J^c_I} \, ,
\end{equation}
where $J^c_I = \{j_1 < \cdots < j_s\}$ is the complement of~$J$ in~$I$.
The exponent~$m_J$ is defined as follows: 
if $J = \{i_1  < \cdots < i_r\}$ and $J^c_I = \{j_1 < \cdots < j_s\}$,
let $m_k$ denote for each $k = 1, \ldots, r$ the cardinality of the set $\{ j_{\ell} \mid i_k > j_{\ell} \}$;
then $m_J = \sum_{k=1}^r \, m_k $.

The Hopf algebra $H$ is pointed and $G(H) \cong \ZZZ/2$, generated by~$x$.
One checks that the commutative Hopf algebra quotient~$H_{\ab}$ of~$H$ is given by
\begin{equation*}
H_{\ab} = H/(y_1, \ldots, y_n) = k[G(H)] \cong k[\ZZZ/2] \, .
\end{equation*}

\subsection{The generic base algebra}\label{ssec-BH-En}

For this Hopf algebra, $S(t_H)$ is the polynomial algebra on the indeterminates 
$t_{y_I}$ and $t_{xy_I}$, where $I$ runs over all subsets of~$\{1, \ldots, n \}$.
Since~$H$ is pointed with $1$ and~$x$ as only group-like elements, 
the localization~$S(t_H)_{\Theta}$ of~$S(t_H)$ is obtained from~$S(t_H)$ 
by inverting~$t_1$ and~$t_x$:
\begin{equation*}
S(t_H)_{\Theta} = S(t_H) \left[ \frac{1}{t_1}, \frac{1}{t_x}  \right] .
\end{equation*}

The $H_{\ab}$-comodule algebra structure on~$S(t_H)_{\Theta}$ (see Section\,\ref{subsec-loc}) induces a 
$\ZZZ/2$-grading on it. It is easy to check from\,\eqref{coproduct-yI} and\,\eqref{coproduct-xyI}
that
\[
\deg(t_{y_I}) = |I| \quad\text{and} \quad
\deg(t_{xy_I}) = |I| +1 \in \ZZZ/2
\]
for all $I \subset \{1, \ldots, n\}$.

Set $\Gamma_0 = \{t_1, (t_x)^2 \}$ and $\Gamma_0^{-1} = \{t_1^{-1}, (t_x)^{-2} \}$.
Consider the set $\Gamma_1$ consisting of the elements $t_{y_I}$, $t_xt_{xy_I}$, where $|I|$ is even $\geq 2$, 
and of the elements $t_xt_{y_I}$, $t_{xy_I}$, where $|I|$ is odd. 
Observe that these elements are all of degree~$0$.

\begin{theorem}\label{prop-BH-En}
(a) The $2^{n+1}$ elements of $\Gamma_0 \cup \Gamma_1$ are algebraically independent
and the generic base algebra $\BB_H$ is given by
\begin{equation*}
\BB_H = k[\Gamma_0, \Gamma_0^{-1}, \Gamma_1] \, .
\end{equation*}

(b) As a $\BB_H$-algebra, we have
\begin{equation*}
S(t_H)_{\Theta} \cong \BB_H[t]/ \left( t^2 - (t_x)^2 \right) \, .
\end{equation*}
\end{theorem}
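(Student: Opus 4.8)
The plan is to mirror the treatment of the Taft algebras in Section~\ref{Taft-def}, establishing (a) and (b) in turn. For the algebraic independence asserted in~(a) I would apply the Jacobian criterion. The $2^{n+1}$ variables $t_{y_I}$, $t_{xy_I}$ ($I \subset \{1,\ldots,n\}$) split by the parity of~$|I|$ into two families of size~$2^n$: call \emph{pure} those variables appearing directly among the generators, namely $t_{y_I}$ with $|I|$ even and $t_{xy_I}$ with $|I|$ odd, and \emph{auxiliary} the remaining ones, $t_{y_I}$ with $|I|$ odd and $t_{xy_I}$ with $|I|$ even. Then $\Gamma_0 \cup \Gamma_1$ consists exactly of the pure variables (with $t_1 = t_{y_\emptyset} \in \Gamma_0$) together with the products $(t_x)^2$ and $t_x \cdot (\text{auxiliary variable})$.

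Because the pure generators depend only on pure variables while $(t_x)^2$ and the products depend only on auxiliary variables, the Jacobian matrix is block diagonal with respect to this splitting. The pure block is the identity. The auxiliary block, after listing $(t_x)^2$ first and then the products $t_x w$ for the auxiliary variables $w \neq t_x$, is lower triangular with diagonal $(2t_x, t_x, \ldots, t_x)$, so its determinant is $2(t_x)^{2^n}$. Since $k$ has characteristic zero this is nonzero, which gives the algebraic independence.

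For the equality $\BB_H = k[\Gamma_0,\Gamma_0^{-1},\Gamma_1]$ I would argue as in Theorem~\ref{prop-BH-En}\,(a) is proved in parallel to Theorem~\ref{thm-BH-Taft}\,(b). The natural map $G(H) \to G(H_{\ab})$ is an isomorphism onto $\ZZZ/2$; as $H$ is pointed, \cite[Th.\,3.9]{KM} makes $S(t_H)_\Theta$ a free $\BB_H$-module and \cite[Lemma\,3.11]{KM} identifies $\BB_H$ with the degree-$0$ part of $S(t_H)_\Theta$ for the $\ZZZ/2$-grading recorded in Section~\ref{ssec-BH-En}. Every element of $\Gamma_0 \cup \Gamma_0^{-1} \cup \Gamma_1$ has degree~$0$, so one inclusion is immediate. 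For the converse, given a degree-$0$ monomial I would replace each odd-degree variable factor by its $\Gamma_1$-partner, writing $t_{y_I} = (t_x t_{y_I})\,t_x^{-1}$ when $|I|$ is odd and $t_{xy_I} = (t_x t_{xy_I})\,t_x^{-1}$ when $|I|$ is even $\geq 2$, and absorb the resulting powers of~$t_x^{-1}$ into the group-like part. What remains is a degree-$0$ Laurent monomial in $t_1, t_x$, hence of the form $t_1^a (t_x^2)^b \in k[\Gamma_0,\Gamma_0^{-1}]$. This is the analogue of Lemma~\ref{lem-S0}.

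Part~(b) is the analogue of Proposition~\ref{prop-S-BH-Taft}. I would define the $\BB_H$-algebra map $f : \BB_H[t]/(t^2 - (t_x)^2) \to S(t_H)_\Theta$ by $f(t) = t_x$; it is well defined. For surjectivity, the pure variables already lie in $\BB_H$, while each auxiliary variable is recovered from its $\Gamma_1$-partner, for instance $t_{xy_I} = f\!\left((t_x t_{xy_I})(t_x^2)^{-1}\,t\right)$ when $|I|$ is even $\geq 2$, and symmetrically for $t_{y_I}$ with $|I|$ odd. For injectivity it suffices that $1$ and $t_x$ be linearly independent over~$\BB_H$, which is immediate from the grading: in $a + b\,t_x = 0$ with $a,b \in \BB_H$ the summands have degrees $0$ and $1$, forcing $a = b = 0$. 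I expect no genuine obstacle beyond organizing the Jacobian via the even/odd split so that the triangular structure and the determinant $2(t_x)^{2^n}$ emerge; in particular the signs $(-1)^{m_J}$ in \eqref{coproduct-yI}--\eqref{coproduct-xyI} play no role, since only the powers of~$x$ affect the grading.
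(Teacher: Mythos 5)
Your proposal is correct and follows exactly the route the paper intends: the published text simply says to proceed as in Theorem~\ref{thm-BH-Taft} and Proposition~\ref{prop-S-BH-Taft}, and your even/odd (pure/auxiliary) splitting of the variables, the block-triangular Jacobian with determinant $2(t_x)^{2^n}$, the identification of $\BB_H$ with the degree-$0$ part of the $\ZZZ/2$-graded algebra $S(t_H)_\Theta$ via \cite[Th.\,3.9, Lemma\,3.11]{KM}, and the map $t\mapsto t_x$ for part~(b) are precisely the adaptations of the Taft-case arguments that the authors leave to the reader. No gaps.
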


For the proof one proceeds as for Theorem\,\ref{thm-BH-Taft} and Proposition\,\ref{prop-S-BH-Taft}.

\subsection{The universal comodule algebra}

The tensor algebra~$T(t_H)$ is the free algebra on the indeterminates 
$X_{y_I}, X_{xy_I}$ ($I \subset\{1, 2, \ldots, n\}$).

Let $\VV_H$ be the subalgebra of~$\UU_H$ of coinvariant elements as before;
$\VV_H$ sits in the center of~$\UU_H$, which is bigger than~$\VV_H$ when $n>1$.
Recall that  an element~$P$ belongs to~$\VV_H$ if and only if~$\mu_0(P) \in S(t_H) \otimes 1$.
Since $\mu_0(X_1) = t_1$ and $\mu_0(X_x^2) = t_x^2$, the elements $X_1$ and $X_x^2$ belong to~$\VV_H$.

Let $\VV'_H$ (resp.\ $\UU'_H$) be the localization of~$\VV_H$ (resp.\ of~$\UU_H$) 
obtained by inverting the central elements~$X_1$ and~$X_x^2$.

As in Section\,\ref{ssec-U-Taft}, we identify each element of~$\UU'_H$ with its image in 
the algebra $S(t_H)_{\Theta} \otimes H$.

\begin{prop}\label{B-En}
We have $\BB_H = \VV'_H$.
\end{prop}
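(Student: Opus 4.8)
The plan is to follow the same strategy that was used for the Taft algebra in Proposition~\ref{Taft-nice}, namely to show that the two sets of generators match up. By Theorem~\ref{prop-BH-En}(a), the algebra $\BB_H$ is generated by $\Gamma_0 \cup \Gamma_0^{-1} \cup \Gamma_1$, so it suffices to prove that every element of these three sets lies in $\VV'_H$, together with the reverse inclusion $\VV'_H \subseteq \BB_H$, which already follows from the general embedding $\VV_H^\alpha \hookrightarrow \BB_H^\alpha$ recalled in Section~\ref{subsec-relating} (localized at the central elements $X_1$ and $X_x^2$). Thus the substance of the proof is the single inclusion $\BB_H \subseteq \VV'_H$, and for this I only need to realize each generator of $\BB_H$ as (the image under $\mu_0$ of) a coinvariant element of the localized universal comodule algebra.

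First I would handle $\Gamma_0$ and $\Gamma_0^{-1}$. Under the identification of $\UU'_H$ with its image in $S(t_H)_\Theta \otimes H$, we have $X_1 = t_1$ and $X_x = t_x\,x$, so $X_x^2 = t_x^2$; hence $t_1, t_x^2 \in \VV'_H$ immediately, and their inverses lie in $\VV'_H$ by the very construction of the localization. The main work is $\Gamma_1$, and here I expect to need an analogue of Lemma~\ref{Taft-lem}: I would prove, by induction on $|I|$, the three parallel statements that (a) the products $t_{y_I}\,t_{x^{|I|}}$ (and $t_{xy_I}\,t_{x^{|I|+1}}$) lie in $\VV'_H$, that (b) the generators $X_{y_I}$ and $X_{xy_I}$ lie in the $\VV'_H$-subalgebra $A$ generated by $X_x$ and $X_{y_1},\dots,X_{y_n}$, and that (c) the ``straightened'' elements $t_{y_I}\,x^{|I|}$ and $t_{xy_I}\,x^{|I|+1}$ lie in $A$. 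The base case $|I|=0,1$ uses $X_{y_i} = t_1\,y_i + t_{y_i}\,x$ together with the commutation relation $y_i x = -x y_i$ to extract each $t_1\,y_i$ (equivalently each $y_i$, after inverting $X_1$ and $X_x^2$) as an element of $A$, exactly mirroring the computation $(q-1)X_1 X_x^n\,y = X_x^{n-1}(X_y X_x - X_x X_y)$ from the Taft case with $q=-1$. The inductive step then peels off the top coproduct term in \eqref{coproduct-yI} and \eqref{coproduct-xyI}: writing $X_{y_I} = t_{y_I}\,x^{|I|} + \psi$, where $\psi$ collects the lower terms (which lie in $A$ by induction since each involves some $t_{y_J}\,x^{|J|}$ with $J \subsetneq I$ and some $y_{J^c_I} \in A$), one multiplies $X_{y_I}-\psi$ by $X_x^{\,2-(|I|\bmod 2)}$ or similar to land in $S(t_H)_\Theta \otimes 1 = \VV'_H$, proving (a), and then solves for $t_{y_I}\,x^{|I|}$ to get (c).

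The one genuine obstacle, as opposed to routine bookkeeping, is the sign factors $(-1)^{m_J}$ appearing in \eqref{coproduct-yI} and \eqref{coproduct-xyI}, which have no counterpart in the Taft computation; I would need to check that the separation of the leading term $t_{y_I}\otimes x^{|I|}\,y_\emptyset$ (the $J=I$ summand) from the remainder is clean, and that the exterior-algebra anticommutation is compatible with pulling $y$-variables past the group-like $x$ inside $A$. Once the inductive lemma is established, the proposition follows in two lines: part (a) of the lemma shows that the elements $t_{y_I}$ and $t_x t_{xy_I}$ with $|I|$ even, and $t_x t_{y_I}$ and $t_{xy_I}$ with $|I|$ odd---that is, precisely the members of $\Gamma_1$, after absorbing the appropriate power of $t_x$ using $t_x^2 \in \VV'_H$---all lie in $\VV'_H$. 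Combined with the $\Gamma_0$ computation above, every generator of $\BB_H$ is coinvariant, giving $\BB_H \subseteq \VV'_H$ and hence the desired equality.
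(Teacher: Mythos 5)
Your proposal is correct and follows essentially the same route as the paper: the paper's proof likewise disposes of $\Gamma_0^{\pm 1}$ by the identities $t_1^{\pm 1}=X_1^{\pm 1}$ and $t_x^{\pm 2}=X_x^{\pm 2}$, and then handles $\Gamma_1$ by an induction on $|I|$ modelled on Lemma~\ref{Taft-lem} (which the paper, like you, does not write out in full). Your extra remarks on the reverse inclusion and on the signs $(-1)^{m_J}$ are sound but do not change the argument.
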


This proves that the trivial cocycle on~$E(n)$ is nice.

\begin{proof}
By Theorem\,\ref{prop-BH-En}\,(b) we need to show that
$\Gamma_0$, $\Gamma_0^{-1}$, and $\Gamma_1$ are subsets of~$\VV'_H$.
This is easy for $\Gamma_0^{\pm 1}$. Indeed,
$t_1^{\pm 1} = X_1^{\pm 1}$ and $t_x^{\pm 2}=X_x^{\pm 2}$ belong to~$\VV'_H$. 
For~$\Gamma_1$ one proceeds by induction on~$|I|$ as in the proof of Lemma\,\ref{Taft-lem}.
\end{proof}

We now determine $\UU'_H$ as a $\BB_H$-algebra.
\begin{theorem}
Let $\AA_H$ be the $\BB_H$-algebra generated by $\xi, \eta_1, \ldots, \eta_n$ subject to the relations
\begin{equation*}
\xi^2 = X_x^2 \, ,\;\;  \eta_1^2 =  \cdots = \eta_n^2 = 0 \, ,
\quad \eta_i\xi + \xi\eta_i  = 0 \, , \quad \eta_i \eta_j + \eta_j \eta_i = 0 
\end{equation*}
for all $i,j= 1, \ldots, n$.
There is an isomorphism of algebras $\UU'_H \cong \AA_H$.
\end{theorem}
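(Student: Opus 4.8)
The strategy is to mimic exactly the proof of Theorem~\ref{U=Taft} for the Taft case. First I would define a $\BB_H$-algebra map $f: \AA_H \to \UU'_H$ by specifying the images of the generators. Guided by the Taft computation, where the ``pure'' exterior generator was obtained by subtracting off the group-like contamination, I would set
\begin{equation*}
f(\xi) = X_x \quad\text{and}\quad f(\eta_i) = X_{y_i} - \frac{t_{y_i}}{t_x}\, X_x \qquad (i = 1, \ldots, n).
\end{equation*}
Each coefficient $t_{y_i}/t_x$ has $\ZZZ/2$-degree $|\{i\}| + 1 - 1 = 0$, so it lies in $\BB_H = \VV'_H$ by Proposition~\ref{B-En}, making these images legitimate elements of $\UU'_H$. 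Under the identification with the image in $S(t_H)_{\Theta} \otimes H$, one has $f(\xi) = t_x\, x$ and, using $\mu_0(X_{y_i}) = t_1\, y_i + t_{y_i}\, x$ from\,\eqref{En-coproduct}, a short computation gives $f(\eta_i) = t_1\, y_i$, exactly parallel to the Taft case where $f(\eta) = t_1 y$.

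The next step is to check that $f$ respects the defining relations of $\AA_H$, so that it is a well-defined algebra map. Since $f(\xi) = t_x\, x$ and $f(\eta_i) = t_1\, y_i$ are scalar multiples (by central, coinvariant scalars) of the corresponding generators $x, y_i$ of $H$, the relations $\xi^2 = X_x^2$, $\eta_i^2 = 0$, $\eta_i \xi + \xi \eta_i = 0$, and $\eta_i \eta_j + \eta_j \eta_i = 0$ translate directly into the defining relations $x^2 = 1$, $y_i^2 = 0$, $y_i x + x y_i = 0$, $y_i y_j + y_j y_i = 0$ of $H = E(n)$, together with $(t_x x)^2 = t_x^2 = X_x^2$. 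This is the routine verification and I would not grind through it.

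For injectivity I would use the fact that, as a $\VV'_H$-module, $\AA_H$ is free with the monomial basis $\{\xi^{\varepsilon}\, \eta_I \mid \varepsilon \in \{0,1\},\; I \subset \{1, \ldots, n\}\}$, of rank $2^{n+1}$. Writing a general element $\omega = \sum_{\varepsilon, I} v_{\varepsilon, I}\, \xi^{\varepsilon} \eta_I$ with $v_{\varepsilon, I} \in \VV'_H$, I would compute $f(\omega) = \sum_{\varepsilon, I} v_{\varepsilon, I}\, t_x^{\varepsilon} t_1^{|I|}\, x^{\varepsilon} y_I$ and invoke the linear independence of the basis elements $\{x^{\varepsilon} y_I\}$ of $H$ over $S(t_H)_{\Theta}$; if $f(\omega) = 0$ then each $v_{\varepsilon, I}\, t_x^{\varepsilon} t_1^{|I|} = 0$, and since $t_x, t_1$ are invertible non-zero-divisors this forces all $v_{\varepsilon, I} = 0$. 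Surjectivity follows because Proposition~\ref{B-En} (via the analogue of Lemma~\ref{Taft-lem}\,(b)) shows $\UU'_H$ is generated over $\BB_H$ by the elements $X_{y_I}, X_{xy_I}$, each of which lies in the image of $f$; concretely one recovers them inductively on $|I|$ from the products $f(\eta_{i_1}) \cdots f(\eta_{i_r})$ and $f(\xi)\, f(\eta_{i_1}) \cdots f(\eta_{i_r})$ by clearing the central scalars, just as in the Taft proof.

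The main obstacle I anticipate is the surjectivity bookkeeping: establishing that every $X_{xy_I}$ and every $X_{y_I}$ with $|I| \geq 2$ lies in the image of $f$ requires the inductive expansion of the coproduct\,\eqref{coproduct-yI}--\eqref{coproduct-xyI} with its sign exponents $m_J$, analogous to Lemma~\ref{Taft-lem}. The subtlety is that $\eta_{i_1} \cdots \eta_{i_r}$ maps to $t_1^r\, y_I$ only up to the ordering sign, and one must confirm the signs match those of\,\eqref{coproduct-yI} so that the clearing of scalars recovers precisely $X_{y_I}$ rather than a sign-twisted variant; since all the relevant elements are coinvariant scalars times genuine algebra generators, these signs are governed by the exterior-algebra commutation already built into the relations of $\AA_H$, and the two sign systems agree.
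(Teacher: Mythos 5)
Your proposal is correct and follows essentially the same route as the paper: the same map $f(\xi)=X_x$, $f(\eta_i)=X_{y_i}-(t_{y_i}/t_x)X_x=t_1y_i$, injectivity via the free $\VV'_H$-basis $\{\xi^{\varepsilon}\eta_I\}$ and linear independence of $\{x^{\varepsilon}y_I\}$ in $H$, and surjectivity via a lemma showing $\UU'_H$ is generated over $\VV'_H$ by $X_x, X_{y_1},\ldots,X_{y_n}$ (equivalently, that each $X_{y_I}$ and $X_{xy_I}$ is reached inductively through the coproduct formulas, where the signs $(-1)^{m_J}$ cause no trouble since every summand already lies in the image).
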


\begin{proof}
The proof is similar to the proof of Theorem\,\ref{U=Taft}.
Recall from Proposition\,\ref{B-En} that $\BB_H = \VV'_H$. 
We define a $\VV'_H$-algebra map $f: \AA_H \to \UU'_H$ by
$f(\xi) = X_x = t_x \, x$ and 
\begin{equation*}
f(\eta_i) = X_{y_i} - \frac{t_{y_i}}{t_x} \, X_x  = (t_1 \, y_i + t_{y_i} \, x) - \frac{t_{y_i}}{t_x} \, t_x \, x = t_1 y_i \, .
\end{equation*}
This map is well defined.
The injectivity of~$f$ is proved as in the Taft case.
The surjectivity is an immediate consequence of the following lemma. 
\end{proof}

\begin{lemma}
As a $\VV'_H$-algebra, $\UU'_H$ is generated  by $X_x, X_{y_1},\ldots , X_{y_n}$.
\end{lemma}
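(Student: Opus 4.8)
The plan is to mirror the proof of Lemma~\ref{Taft-lem}. Write $A$ for the $\VV'_H$-subalgebra of~$\UU'_H$ generated by $X_x, X_{y_1}, \ldots, X_{y_n}$; since $\UU'_H$ is generated over~$k$, hence over~$\VV'_H$, by the images of all the $X_{y_I}$ and $X_{xy_I}$, it suffices to show that each of these lies in~$A$, for then $A = \UU'_H$. Under the identification of~$\UU'_H$ with its image in $S(t_H)_{\Theta} \otimes H$ we have $X_x = t_x\, x$ and $X_x^2 = t_x^2 \in \VV'_H$; as $t_x^2$ is invertible, so is~$X_x$, with $X_x^{-1} = X_x^{-2} X_x \in A$.

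First I would record the key fact that $1 \otimes y_i \in A$ for each~$i$. By\,\eqref{En-coproduct}, $X_{y_i} = t_1\, y_i + t_{y_i}\, x$, and since $\deg t_{y_i} = \deg t_x = 1$ the fraction $t_{y_i}/t_x$ has $\ZZZ/2$-degree~$0$, so it belongs to $\BB_H = \VV'_H$ by Proposition~\ref{B-En}. Hence $t_1\, y_i = X_{y_i} - (t_{y_i}/t_x)\, X_x \in A$, and because $t_1 = X_1$ is invertible in~$\VV'_H$ we obtain $y_i = t_1^{-1}(t_1\, y_i) \in A$ (writing $y_i$ for $1\otimes y_i$). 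Consequently $y_I = y_{i_1}\cdots y_{i_r} \in A$ for every $I = \{i_1 < \cdots < i_r\}$.

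The heart of the argument is an induction on~$|I|$ proving simultaneously the three statements: (a) $t_{y_I}\, t_x^{|I|}$ and $t_{xy_I}\, t_x^{|I|+1}$ lie in~$\VV'_H$; (b) $X_{y_I}, X_{xy_I} \in A$; and (c) $t_{y_I}\, x^{|I|}, t_{xy_I}\, x^{|I|+1} \in A$. Item~(a) is immediate from a degree count, since both products have $\ZZZ/2$-degree~$0$ and $\VV'_H$ is exactly the degree-zero subalgebra $S_0$ of~$S(t_H)_{\Theta}$ by Theorem~\ref{prop-BH-En} and Proposition~\ref{B-En}. The base case $|I| = 0$ is clear. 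For the inductive step I would use\,\eqref{coproduct-yI} to separate off the top term $J = I$ (for which $m_I = 0$ and $y_{J^c_I} = 1$), writing $X_{y_I} = t_{y_I}\, x^{|I|} + \psi$ with $\psi = \sum_{J \subsetneq I} (-1)^{m_J} (t_{y_J}\, x^{|J|})(1\otimes y_{J^c_I})$. By the induction hypothesis~(c) and the previous paragraph, $\psi \in A$, so $t_{y_I}\, x^{|I|} = X_{y_I} - \psi \in \UU'_H$. Multiplying by~$X_x^{|I|}$ and using $x^2 = 1$ gives $t_{y_I}\, t_x^{|I|} \in \VV'_H$, and then $t_{y_I}\, x^{|I|} = X_x^{-|I|}(t_{y_I}\, t_x^{|I|}) \in A$, which is~(c); hence $X_{y_I} = t_{y_I}\, x^{|I|} + \psi \in A$, which is~(b). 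The identical argument applied to\,\eqref{coproduct-xyI}, with top term $t_{xy_I}\, x^{|I|+1}$ and multiplier~$X_x^{|I|+1}$, disposes of~$X_{xy_I}$.

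The only place demanding care is the inductive step, where I must know that the tail~$\psi$ already lies in~$A$; this is precisely what forces statements~(b) and~(c) to be carried along together in the induction. By contrast, the exterior-algebra signs $(-1)^{m_J}$ and the super-commutativity relations $xy_i = -y_i x$ and $y_i y_j = -y_j y_i$ merely reorder terms and are harmless for membership in the subalgebra~$A$, so they pose no real obstacle. Once every generator $X_{y_I}$ and $X_{xy_I}$ is shown to lie in~$A$, we conclude $A = \UU'_H$, as desired.
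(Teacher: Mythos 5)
Your proof is correct and follows essentially the same route as the paper: identify $\UU'_H$ inside $S(t_H)_{\Theta}\otimes H$, extract $y_i = X_1^{-1}\bigl(X_{y_i}-(t_{y_i}/t_x)X_x\bigr)\in A$, and then use the coproduct formulas \eqref{coproduct-yI}--\eqref{coproduct-xyI} together with $\VV'_H=\BB_H$ to see that every summand lies in~$A$. The only difference is organizational: since, as you yourself note, your item~(a) (and hence~(c)) follows outright from the degree count and $\VV'_H=\BB_H=S_0$, the induction on $|I|$ is superfluous, and the paper indeed dispenses with it by checking directly that $t_{y_I}x^{|I|}$ and $t_{xy_I}x^{|I|+1}$ lie in~$A$ according to the parity of~$|I|$.
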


\begin{proof}
Let $A$ denote the subalgebra of $\UU'_H$ generated over~$\VV'_H$ 
by the set $\{ X_x, X_{y_1},\ldots , X_{y_n} \}$. 
We need to show that $X_{y_I}, X_{xy_I} \in A$ for all subsets~$I$ of~$\{1, \ldots, n\}$. 

First we check that all $y_I$, $t_{y_I}$ and $t_{xy_I}x^{|I|+1}$ belong to~$A$.
We have
\[
X_{y_i} = t_1 \, y_i + t_{y_i} \, x = X_1 \, y_i + \frac{t_{y_i}}{t_x} \, X_x \, .
\]
From this we deduce that $X_1 y_i$ belongs to~$A$. Since $X_1$ is invertible, 
all $y_i$, hence all $y_I$, belong to~$A$.

Next, observe that, if $|I|$ is even, then $t_{y_I}x^{|I|} = t_{y_I}$, which belongs to $\VV'_H$, hence to~$A$.
If $|I|$ is odd, then $t_{y_I}x^{|I|} = t_{y_I}x = (t_xt_{y_I}/t_x^2) \, X_x$, which clearly belongs to~$A$.

Finally, if $|I|$ is odd, then  $t_{xy_I}x^{|I|+1}= t_{xy_I}$, which belongs to $\VV'_H$. 
If $|I|$ is even, then $t_{xy_I}x^{|I|+1} = t_{xy_I}x = (t_xt_{xy_I}/t_x^2) \, X_x$,
which belongs to~$A$.

By \eqref{coproduct-yI} and \eqref{coproduct-xyI} we have
\[
X_{y_I} = \sum_{J\subset I} \, (-1)^{m_J} t_{y_J} \, x^{|J|}y_{J^c_I}
\quad\text{and}\quad
X_{xy_I} = \sum_{J\subset I} \, (-1)^{m_J}t_{xy_J} \, x^{|J|+1}y_{J^c_I} \, .
\]
By the above observations the expressions inside the summation symbols are in~$A$;
hence, so are $X_{y_I}$ and~$X_{xy_I}$.  Consequently, $\UU'_H=A$.
\end{proof}

\section{Monomial Hopf algebras}\label{sec-monomial}

Fix an integer $n\geq 2$. As in Section\,\ref{Taft-def} we assume that the ground field~$k$
contains a primitive $n$-th root of unity, which we denote by~$q$.
We now extend the results of Section\,\ref{Taft-def} to the following setting.

\subsection{Monomial Hopf algebras of type~$I$}\label{ssec-def-mono}

Consider a triple $(G,x,\chi)$, where
$G$ is a finite group, $x$~a central element of~$G$ of order~$n\geq 2$,
and~$\chi$ a character $G \to k^{\times}$
such that $\chi^n = 1$ and $\chi(x) = q$.

To such a triple one associates a Hopf algebra~$H$, 
which is defined as an algebra with generators the elements~$g$ of~$G$ and 
an additional generator~$y$;
the defining relations are those of the group algebra~$kG$ as well as
\begin{equation}\label{rel-mono}
y^n = 0\, , \qquad yg = \chi(g) gy
\end{equation}
for all $g\in G$. 
A basis for $H$ is formed by the elements~$gy^i$, where $g\in G$ and $0 \leq i < n$.
Thus, the dimension of $H$ is $n\, |G|$.

The algebra $H$ has a Hopf algebra structure such that the natural inclusion $\iota: kG \to H$ is a Hopf algebra map
and
\begin{equation}\label{coproduct-mono}
\Delta(y) = 1 \otimes y + y \otimes x \, , \quad \eps(y) = 0 \, , \quad S(y) = - yx^{n-1} \, . 
\end{equation}
In the literature this Hopf algebra is called a \emph{monomial Hopf algebra of type~$I$}
(see\,\cite{Bi},\,\cite[Sect.\,7]{BC},\,\cite{CHYZ}).
If $G = \ZZZ/n$  and $x$ is a generator of~$G$, then $H$ is the $n^2$-dimensional Taft algebra.

The Hopf algebra $H$ is pointed with $G(H) = G$.
One checks that the commutative Hopf algebra quotient~$H_{\ab}$ of~$H$ is given by
\begin{equation*}
H_{\ab} = H/(y) = k[G_{\ab}]  \, ,
\end{equation*}
where $G_{\ab} = G/[G,G]$ is the abelianization of~$G$.

Observe that the algebra surjection $\pi: H \to kG$ sending $y$ to~$0$ is a Hopf algebra map
such that $\pi \circ \iota = \id$.
It follows from Propositions\,\ref{prop-U-funct} and\,\ref{prop-B-funct} that 
they induce comodule algebra maps 
\begin{equation*}
\UU_{kG} \overset{\iota_U}{\longrightarrow} \UU_H \overset{\pi_U}{\longrightarrow} \UU_{kG}
\end{equation*}
and algebra maps
\begin{equation*}
\BB_{kG} \overset{\iota_S}{\longrightarrow} \BB_H \overset{\pi_S}{\longrightarrow} \BB_{kG} 
\end{equation*}
such that $\pi_U \circ \iota_U = \id$ and $\pi_S \circ \iota_S = \id$.
Therefore, $\UU_{kG}$ (resp.\ $\BB_{kG}$) splits off~$\UU_H$ (resp.\ $\BB_H$).
Similary, passing to the coinvariants, $\VV_{kG}$ splits off~$\VV_H$.

In the sequel we shall determine $\BB_H$ and~$\UU_H$ in terms of~$\BB_{kG}$ and~$\UU_{kG}$
respectively. As for the latter, we refer to the appendix.

\subsection{The generic base algebra}\label{ssec-BH-mono}

By definition, $S(t_H)$ is the polynomial algebra on the variables $t_{gy^i}$, 
where $g\in G$ and $0 \leq i < n$.
Since $H$ is pointed and the elements of~$G$ are the only group-like elements, we have
\begin{equation*}
S(t_H)_{\Theta} = k\left[t_{gy^i} \, | \, g\in G \; \text{and}\;  0 \leq i < n \right] \left[\frac{1}{t_g}\right]_{g\in G} .
\end{equation*}
The coproduct on~$S(t_H)_{\Theta}$ is determined by
\begin{equation}\label{mono-coproduct-tt}
\Delta(t_{gy ^i}) = 
\sum_{r=0}^i \, 
\left[ \begin{matrix}
i \\
r
\end{matrix}\right]
t_{g y ^r} \otimes t_{gx^r y^{i-r}}
\end{equation}
(compare with\,\eqref{Taft-coproduct-tt}).

The $H_{\ab}$-comodule algebra structure on~$S(t_H)_{\Theta}$ (see Section\,\ref{subsec-loc}) induces a 
$G_{\ab}$-grading on it. It is easy to check from\,\eqref{coaction-SH} and\,\eqref{mono-coproduct-tt} 
that 
\[
\delta(t_{gy ^i}) = t_{gy ^i} \otimes q(gx^i) \, ,
\] 
meaning that for the $G_{\ab}$-grading, $\deg(t_{gy^i}) = \overline{gx^i}$, 
where $\overline{gx^i}$ is the image of~$gx^i$ in~$G_{\ab}$.

Consider the set $\Gamma$ of cardinality~$(n-1) |G|$
consisting of the fractions $t_{gy^i}/t_{gx^i}$ where $g\in G$ and $0 < i < n$.
By the above calculation the elements of~$\Gamma$ are all of degree~$0 \in G_{\ab}$.

\begin{theorem}\label{thm-BH-mono}
The set $\Gamma$  is algebraically independent over $\BB_{kG}$ and 
\[
\BB_H = \BB_{kG} [\Gamma] \, .
\]
\end{theorem}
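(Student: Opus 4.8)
The plan is to mimic the structure of the Taft algebra case (Theorem~\ref{thm-BH-Taft}), but now working \emph{relative} to the base algebra~$\BB_{kG}$ rather than the ground field~$k$. The key structural input is the decomposition established just before this statement: the Hopf algebra maps $\iota : kG \to H$ and $\pi : H \to kG$ with $\pi\circ\iota = \id$ induce $\iota_S : \BB_{kG} \hookrightarrow \BB_H$ and $\pi_S : \BB_H \to \BB_{kG}$ with $\pi_S\circ\iota_S = \id$, so that $\BB_{kG}$ splits off~$\BB_H$. First I would invoke the characterization \eqref{S-coinvariant}, valid since $H$ is pointed and the natural map $G(H) \to G(H_{\ab})$ is surjective: by \cite[Th.\,3.9, Lemma\,3.11]{KM} we have $\BB_H = S(t_H)_{\Theta}^{H_{\ab}}$, which under the $G_{\ab}$-grading means $\BB_H$ is exactly the degree-$0$ subalgebra of~$S(t_H)_{\Theta}$. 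The explicit degree computation $\deg(t_{gy^i}) = \overline{gx^i}$ is already recorded above, so the task reduces to identifying the degree-$0$ subalgebra.

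Next I would show the two inclusions. That $\BB_{kG}[\Gamma] \subseteq \BB_H$ is immediate: each $t_{gy^i}/t_{gx^i}$ has degree~$0$ by construction, and $\BB_{kG} = \iota_S(\BB_{kG}) \subseteq \BB_H$ sits inside the degree-$0$ part because $\iota_S$ is induced by the inclusion $kG \hookrightarrow H$ which respects the gradings. For the reverse inclusion $\BB_H \subseteq \BB_{kG}[\Gamma]$, I would write a general degree-$0$ element of $S(t_H)_{\Theta}$ as a sum of monomials in the generators $t_{gy^i}$ (with the group-like $t_g$ inverted) and show each degree-$0$ monomial lies in $\BB_{kG}[\Gamma]$. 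The mechanism is the same factoring trick used in Lemma~\ref{lem-S0}: for each factor $t_{gy^i}$ with $i \neq 0$, multiply and divide by $t_{gx^i}$ to replace it by the generator $t_{gy^i}/t_{gx^i} \in \Gamma$ times the group-like factor $t_{gx^i}$; what remains is a degree-$0$ Laurent expression purely in the $t_g$ ($g\in G$), which is a degree-$0$ element of $S(t_{kG})_{\Theta}$ and hence lies in $\BB_{kG}$ by the known group-algebra description recalled in the appendix.

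For the algebraic independence of~$\Gamma$ over~$\BB_{kG}$, I would argue via the splitting $\pi_S$. The projection $\pi_S : \BB_H \to \BB_{kG}$ corresponds on generators to $t_{gy^i} \mapsto 0$ for $i>0$ and $t_g \mapsto t_g$, so it kills every element of~$\Gamma$. Since $\pi_S\circ\iota_S = \id$, the subalgebra $\BB_{kG}[\Gamma]$ is a polynomial-type extension of $\BB_{kG}$ in which $\Gamma$ generates the kernel of $\pi_S$ restricted to it; concretely, I would check that a nontrivial polynomial relation among the $t_{gy^i}/t_{gx^i}$ with coefficients in $\BB_{kG}$ would, after clearing the invertible group-like denominators, produce a relation among the algebraically independent variables $t_{gy^i}$ over the field $\Frac S(t_{kG})_{\Theta}$ inside $S(t_H)_{\Theta}$, a contradiction. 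Alternatively, a Jacobian computation analogous to part~(a) of Theorem~\ref{thm-BH-Taft} works, but the splitting argument is cleaner.

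The main obstacle I anticipate is the reverse inclusion $\BB_H \subseteq \BB_{kG}[\Gamma]$: I must ensure that \emph{every} degree-$0$ monomial, including those mixing several distinct factors $t_{g_1 y^{i_1}}, \ldots, t_{g_r y^{i_r}}$ with various exponents, can be rebuilt from $\Gamma$ and $\BB_{kG}$ after the factoring substitution --- and in particular that the residual group-like Laurent factor that is left over is genuinely degree-$0$ and therefore lands in $\BB_{kG}$. This requires knowing that $\BB_{kG}$ really is the full degree-$0$ subalgebra of $S(t_{kG})_{\Theta}$, which is precisely the content of the group-algebra results summarized in Appendix~\ref{app-kG}; once that is granted, the bookkeeping of exponents modulo the grading is routine and parallels Lemma~\ref{lem-S0} verbatim.
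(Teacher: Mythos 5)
Your proposal is correct, but it follows a genuinely different route from the one the paper takes for this theorem. The paper's proof verifies condition~(ii) of Remark~\ref{rem-equivalence}: it checks that $B=\BB_{kG}[\Gamma]$ is a left coideal of~$S(t_H)_{\Theta}$ (from~\eqref{mono-coproduct-tt}), that $S(t_H)_{\Theta}/(B^+)\cong k[G_{\ab}]$ (because modulo~$(B^+)$ each $t_{gy^i}$ with $i>0$ vanishes, reducing everything to the group-algebra computation), and that the group-like elements of~$B$ are those of~$\BB_{kG}$, i.e.\ the group~$Y_G$, hence closed under inversion; the algebraic independence is simply declared obvious. You instead verify the equivalent condition~(i), identifying $\BB_H$ with the degree-$0$ component of~$S(t_H)_{\Theta}$ for the $G_{\ab}$-grading and then running the monomial-factoring argument of Lemma~\ref{lem-S0} relative to~$\BB_{kG}$, using the appendix fact that $\BB_{kG}$ is exactly the degree-$0$ part of~$S(t_{kG})_{\Theta}$. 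This is precisely the authors' original (pre-referee) style of proof from Theorem~\ref{thm-BH-Taft}(b); it is more explicit and makes the shape of $\BB_H=\BB_{kG}[\Gamma]$ transparent, at the cost of some exponent bookkeeping, whereas the paper's version pushes all the work into one short quotient computation. Both routes rest on the same input from~\cite{KM} via Remark~\ref{rem-equivalence}. Two small points: (1) when you invoke $\BB_H=S(t_H)_{\Theta}^{H_{\ab}}$ you justify it by ``$G(H)\to G(H_{\ab})$ is surjective,'' but the Taft proof checked this map is an \emph{isomorphism}, which fails here for nonabelian~$G$; you should instead appeal to the equivalence of conditions~(i) and~(ii) in Remark~\ref{rem-equivalence} (which the paper itself relies on in this setting, and which is consistent with the degree-zero description of~$\BB_{kG}$ for arbitrary finite~$G$ in the appendix). (2) Your independence argument via clearing denominators and viewing the $t_{gy^i}$ as polynomial variables over $\Frac S(t_{kG})_{\Theta}$ is sound and in fact supplies a proof of a step the paper leaves unargued.
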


\begin{proof}
The algebraic independence of~$\Gamma$ is obvious.
The equality can be proved as in Remark\,\ref{rem-equivalence}:
by Property\,(ii) there, it suffices to check that the three following conditions are satisfied for~$B = \BB_{kG}[\Gamma]$:
\begin{itemize}
\item[(a)]
$B$ is a left coideal of~$S(t_H)_{\Theta}$;

\item[(b)]
$S(t_H)_{\Theta}/(B^+) \cong H_{\ab} = k[G_{\ab}]$, where $B^+ = B \cap \Ker\eps$;

\item[(c)]
the set of group-like elements contained in~$B$ is closed under inverse.
\end{itemize}

Condition\,(a) follows from\,\eqref{mono-coproduct-tt}.
For Condition\,(b) we note that $B^+$ is generated by $\BB_{kG}^+$ and $t_{gy^i}/t_{gx^i}$ 
for all $g\in G$ and $0 < i < n$.
Consequently, $t_{gy^i} \equiv 0 \pmod {B^+}$ when $0 < i < n$. 
From this one deduces $S(t_H)_{\Theta}/(B^+) \cong k[G_{\ab}]$.
Condition\,(c) is also satisfied: indeed, the group-like elements in~$B$ coincide with the group-like elements in~$\BB_{kG}$,
which by Proposition\,\ref{prop-BG} are the elements of the group~$Y_G$ described in the appendix.
\end{proof}

\subsection{The universal comodule algebra}

As observed at the end of Section~\ref{ssec-def-mono}, $\UU_{kG}$ splits off~$\UU_H$ and
$\VV_{kG}$ splits off~$\VV_H$.

There is a localization $\VV'_{kG}$ of~$\VV_{kG}$ such that $\BB_{kG} = \VV'_{kG}$
(see Section\,\ref{ssec-UG} of the appendix).
We define a localization~$\VV'_H$ of~$\VV_H$ by
\[
\VV'_H = \VV'_{kG} \otimes_{\VV_{kG}} \VV_H \, .
\]
We also define the central localization
\[
\UU'_H =  \VV'_H \otimes_{\VV_H} \UU_H = \VV'_{kG} \otimes_{\VV_{kG}} \UU_H \, .
\]

We claim the following.

\begin{theorem}
We have $\VV'_H = \BB_H$ and there is an algebra isomorphism
\[
\UU'_H = \UU'_{kG} * k[\eta] \, /\left(\eta^n = 0, \; \eta X_g - \chi(g) X_g \eta = 0 \; |\, g\in G \right) .
\]
\end{theorem}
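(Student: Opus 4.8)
The plan is to follow closely the treatment of the Taft algebra in Proposition~\ref{Taft-nice} and Theorem~\ref{U=Taft}, the subalgebra~$\UU'_{kG}$ now playing the role of the single generator $\xi = X_x$. Throughout I identify $\UU'_H$ with its image in $S(t_H)_{\Theta}\otimes H$ under~$\mu_0$, so that $\VV'_H$ consists precisely of the elements whose image lies in $S(t_H)_{\Theta}\otimes 1$; recall that $\mu_0(X_g) = t_g\, g$ for $g\in G$ and $\mu_0(X_y) = t_1 y + t_y x$.

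For the equality $\VV'_H = \BB_H$ I would use Theorem~\ref{thm-BH-mono}, which gives $\BB_H = \BB_{kG}[\Gamma]$ with $\Gamma = \{\, t_{gy^i}/t_{gx^i} \mid g\in G,\ 0<i<n \,\}$. Since $\BB_{kG} = \VV'_{kG} \subseteq \VV'_H$, it is enough to show $\Gamma\subseteq\VV'_H$; the reverse inclusion follows from the general embedding $\VV_H\hookrightarrow\BB_H$ of Section~\ref{subsec-relating}, together with the fact that the elements inverted to pass from $\VV_H$ to $\VV'_H$ already lie in $\BB_{kG}\subseteq\BB_H$ and are invertible there. To obtain $\Gamma\subseteq\VV'_H$ I would prove, exactly as in Lemma~\ref{Taft-lem}, by induction on the $y$-power~$i$, that $t_{gy^i}\,t_{(gx^i)^{-1}}$ lies in~$\VV'_H$ and that $X_{gy^i}$ and $t_{gy^i}\,gx^i$ lie in the $\VV'_H$-subalgebra~$A$ generated by the~$X_g$ and by~$y$. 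The commutator identity $X_y X_x - X_x X_y = (q-1)\,X_1 X_x\,y$, valid because $yx = q\,xy$, shows that $y\in A$ once the central elements $X_1$ and~$X_x$ are inverted. For the inductive step one writes, using~\eqref{mono-coproduct-tt}, $X_{gy^i} = t_{gy^i}\,gx^i + \psi$ with $\psi$ a sum of lower-order terms that the inductive hypothesis places in~$A$; multiplying $X_{gy^i}-\psi$ on the right by $X_{(gx^i)^{-1}}$ gives $t_{gy^i}\,t_{(gx^i)^{-1}}\in\VV'_H$, and dividing by the invertible group-like $t_{gx^i}\,t_{(gx^i)^{-1}}\in\VV'_{kG}$ yields the element $t_{gy^i}/t_{gx^i}$ of~$\Gamma$.

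For the isomorphism I would define a $k$-algebra map~$f$ from the right-hand side to~$\UU'_H$ by sending each~$X_g$ to $X_g\in\UU'_{kG}$ and $\eta$ to $X_y - (t_y/t_x)\,X_x = t_1\,y$. This $f$ is well defined: $f(\eta)^n = t_1^n y^n = 0$, and $f(\eta)\,f(X_g) = \chi(g)\,f(X_g)\,f(\eta)$ because $yg = \chi(g)\,gy$. Surjectivity is immediate from the inductive lemma above, since $y = X_1^{-1} f(\eta)$ shows that~$y$, and hence every generator~$X_{gy^i}$, lies in the image. For injectivity I would reduce a general element to the normal form $\sum_{j=0}^{n-1} w_j\,\eta^j$ with $w_j\in\UU'_{kG}$, compute $f\bigl(\sum_j w_j\,\eta^j\bigr) = \sum_j f(w_j)\,t_1^j\,y^j$, and conclude from the linear independence of the basis $\{\,gy^j\,\}$ of~$H$ and the injectivity of $\UU'_{kG}\hookrightarrow\UU'_H$ that each~$w_j$ vanishes.

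The main obstacle is the normal-form step in the injectivity argument: it requires that~$\eta$ commute with the base ring~$\VV'_{kG}$ inside the free product, so that an arbitrary element really can be pushed to the shape $\sum_j w_j\,\eta^j$ with coefficients in~$\UU'_{kG}$. This commutation is the one genuinely new point compared with the Taft computation; I would deduce it from the relations $\eta X_g = \chi(g)\,X_g\,\eta$ by observing that~$\chi$ factors through~$G_{\ab}$ and that the coinvariant elements of~$\VV'_{kG}$ are homogeneous of degree~$0$ for the $G_{\ab}$-grading, so that the character value accumulated when~$\eta$ is moved across such a coefficient is trivial. Besides this, the argument leans on the appendix for the description of~$\UU'_{kG}$ and for the identity $\BB_{kG}=\VV'_{kG}$, which feed directly into both halves of the statement.
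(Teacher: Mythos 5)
Your handling of the first assertion ($\VV'_H=\BB_H$), the well-definedness of~$f$, and the normal form $\sum_j w_j\eta^j$ (including the key observation that $\eta$ commutes with $\VV'_{kG}$ because $\chi$ is trivial on products $g_1\cdots g_r=e$) is correct and matches what the paper intends. The genuine gap is in the surjectivity step. Lemma~\ref{Taft-lem}\,(b) and its monomial analogue place $X_{gy^i}$ in the \emph{$\VV'_H$-subalgebra} generated by the $X_g$ and~$y$, but the image of your~$f$ is only the \emph{$\VV'_{kG}$-subalgebra} generated by these elements. Indeed, your own normal-form computation shows that
\[
\mathrm{im}(f)\;=\;\bigoplus_{g\in G,\;0\le j<n}\BB_{kG}\,t_g t_1^j\;gy^j\;\subset\;k\bigl[(t_g)^{\pm1}\mid g\in G\bigr]\otimes H,
\]
so no element of $\mathrm{im}(f)$ involves the variables $t_{gy^i}$ with $i>0$. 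Hence $X_{gy^i}=t_{gy^i}\,gx^i+\psi$ is not in the image for $i\ge1$ (already $X_y=t_1y+t_yx$ fails, since one would need $t_y/t_x\in\BB_{kG}$), and the coinvariants of $\mathrm{im}(f)$ are just $\BB_{kG}$ rather than $\VV'_H=\BB_{kG}[\Gamma]$. The step ``$y$ lies in the image, hence every generator $X_{gy^i}$ does'' silently uses the coefficients $t_{gy^i}/t_{gx^i}\in\Gamma$, which lie in $\BB_H$ but not in the right-hand side.

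Moreover, a rank count shows the isomorphism cannot hold as literally stated: the right-hand side is spanned over $\BB_{kG}$ by the $n|G|$ elements $X_g\eta^j$, whereas $\UU'_H$ contains the polynomial ring $\BB_{kG}[\Gamma]$ in $(n-1)|G|$ variables (for $G=\ZZZ/n$ this even contradicts Theorem~\ref{U=Taft}, whose presentation is over the full~$\BB_H$). The statement and your argument become correct once the presentation is taken over $\BB_H=\VV'_H$, i.e., once $\UU'_{kG}$ is replaced by $\BB_H\otimes_{\BB_{kG}}\UU'_{kG}$ (equivalently, once $f$ is declared to be a $\VV'_H$-algebra map as in Theorem~\ref{U=Taft}); then the generation lemma does give surjectivity and your injectivity argument goes through unchanged. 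The paper's own proof, which leaves this point to the reader, has the same defect, so you have faithfully reproduced its route --- gap included --- and you should flag and repair the surjectivity step rather than cite it as immediate.
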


\begin{proof}
For the first assertion, it suffices to check that the elements $t_{gy^i}/t_{gx^i}$ of the set~$\Gamma$
belong to~$\VV'_H$. 
The proof follows the same lines as the proof of Proposition\,\ref{Taft-nice}.
We leave the details to the reader. 

The proof of the second assertion is similar to the proof of Theorem\,\ref{U=Taft}.
Let $R$ be the algebra defined by the RHS of the isomorphism in the theorem.
We define an algebra map $f: R \to \UU'_H$ that is the identity on~$\UU'_{kG}$
and sends~$\eta$ to $f(\eta) = X_y - ({t_y}/{t_x}) \, X_x$.
Using the embedding $T(X_H) \subset S(t_H) \otimes H$ provided by universal comodule map~$\mu_0$,
we obtain
$f(\eta) = t_1 \, y$ as in \emph{loc.\ cit}. Thus, $f(\eta^n) = 0$ in view of\,\eqref{rel-mono}.
Similarly, 
\[
f(\eta X_g - \chi(g) X_g \eta) = t_1 t_g \, (yg - \chi(g) \, gy) = 0 \, .
\]
The rest is left to the reader. 
\end{proof}

\appendix

\section{The group algebra case}\label{app-kG}

Let $G$ be a finite group of order~$N$
and $kG$ be the corresponding group algebra; 
we consider the latter equipped with the usual Hopf algebra structure;
its coproduct and its counit are given by $\Delta(g) = g\otimes g$ and $\eps(g) = 1$ for all $g\in G$.
This Hopf algebra is cocommutative.

The commutative Hopf algebra $(kG)_{\ab}$ is the algebra of the abelianization $G_{\ab} = G/[G,G]$ of~$G$:
\[
(kG)_{\ab} = k[G_{\ab}] \, .
\]
In this appendix we recall known facts on the generic base algebra~$\BB_H$ and 
on the universal comodule algebra~$\UU_H$ when $H= kG$.

\subsection{The generic base algebra}

The symmetric algebra~$S(t_{kG})$ is the polynomial algebra on the variables~$t_g$ ($g\in G$):
\begin{equation*}
S(t_{kG}) = k \left[\, t_g \, |\, g\in G \,\right]  .
\end{equation*}

The Hopf algebra~$kG$ is pointed and its group-like elements are exactly the elements $g\in G$.
Thus by\,\eqref{pointed-loc}
the algebra $S(t_{kG})_{\Theta}$ is the Laurent polynomial algebra
\begin{equation*}
S(t_{kG})_{\Theta} = k \left[\, t_g, t^{-1}_g \, |\, g\in G \,\right]  .
\end{equation*}
In other words, $S(t_{kG})_{\Theta} = k \ZZZ^G$ is the algebra of the free abelian group~$\ZZZ^G$ 
generated by the symbols~$t_g$ ($g\in G$).

There is a surjective homomorphism $p: \ZZZ^G \to G_{\ab}$ defined by $p(t_g) = \overline{g}$, 
where $\overline{g}$ is the image of~$g$ in~$G_{\ab}$.
Let $Y_G$ be the kernel of the homomorphism~$p$:
it is a free abelian group of rank $N= |G|$ and of index~$|G_{\ab}|$ in~$\ZZZ^G$.
The group~$Y_G$ coincides with the subgroup of~$\ZZZ^G$ generated by the elements
\begin{equation*}
\sigma(g,h) = \frac{t_g t_h}{t_{gh}}
\quad\text{and}\quad
\sigma^{-1}(g,h) = \frac{t_{gh}}{t_g t_h}  \qquad (g,h \in G) \, .
\end{equation*}
Observe that $t_e = \sigma(e,e)$ belongs to~$Y_G$, 
where $e$ denotes the identity element of~$G$.

Aljadeff, Haile, and Natapov proved the following (see\,\cite[Prop.\,9 and\,14]{AHN}).

\begin{prop}\label{prop-BG}
We have
$\BB_{kG} = k Y_G$.
\end{prop}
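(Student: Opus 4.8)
The plan is to prove Proposition~\ref{prop-BG} by establishing both inclusions $\BB_{kG} \subseteq kY_G$ and $kY_G \subseteq \BB_{kG}$, working entirely inside the group algebra $S(t_{kG})_{\Theta} = k\ZZZ^G$. First I would recall that for the cocommutative Hopf algebra $kG$ equipped with the trivial cocycle, the generic cocycle formula~\eqref{sigma0-def} simplifies dramatically: since every $g\in G$ is group-like, $\Delta(g) = g\otimes g$ has no higher Sweedler components, so $\sigma(g,h) = t_g t_h t^{-1}_{gh}$ and $\sigma^{-1}(g,h) = t_{gh} t^{-1}_g t^{-1}_h$, exactly the $\sigma(g,h)$ and $\sigma^{-1}(g,h)$ displayed before the statement. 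By definition $\BB_{kG}$ is the subalgebra of $S(t_{kG})_{\Theta}$ generated by these values, so $\BB_{kG}$ is generated as a $k$-algebra precisely by the elements $t_g t_h / t_{gh}$ and their inverses. Since these all lie in the subgroup $Y_G \subset \ZZZ^G$ (each is a product of $t$'s and $t^{-1}$'s whose total $p$-image is $\overline{g}+\overline{h}-\overline{gh} = 0$ in $G_{\ab}$), the inclusion $\BB_{kG} \subseteq kY_G$ follows immediately, and it remains to prove the reverse inclusion.

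For $kY_G \subseteq \BB_{kG}$, the key point is that $Y_G$ is generated \emph{as a group} by the $\sigma(g,h)$, which is the content of the cited statement that $Y_G$ coincides with the subgroup generated by these elements. Granting this, any basis element of $kY_G$ is a monomial $\prod_i \sigma(g_i,h_i)^{\pm 1}$, which is by construction a product of generators of $\BB_{kG}$ and hence lies in $\BB_{kG}$. So the real work is the purely group-theoretic claim that $\Ker p$ is generated by the $\sigma(g,h)$. I would prove this by a rank-and-index count together with an explicit argument that the subgroup $Y'_G := \langle \sigma(g,h) \rangle$ has the right index. One shows $\ZZZ^G / Y'_G \cong G_{\ab}$ via the map induced by $p$: the relations $t_g t_h \equiv t_{gh}$ modulo $Y'_G$ say exactly that $g\mapsto t_g \bmod Y'_G$ is a homomorphism $G \to \ZZZ^G/Y'_G$ into an abelian group, which factors through $G_{\ab}$ and is surjective; conversely $p$ gives the inverse, forcing $Y'_G = \Ker p = Y_G$.

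The main obstacle I anticipate is the verification that $Y'_G$ has index exactly $|G_{\ab}|$, i.e.\ that no proper quotient collapses further. The clean way to handle this is to exhibit the two mutually inverse maps explicitly: $p\colon \ZZZ^G/Y'_G \to G_{\ab}$ sending $t_g \mapsto \overline{g}$ (well defined since $p(\sigma(g,h)) = 0$), and $s\colon G_{\ab} \to \ZZZ^G/Y'_G$ sending $\overline{g} \mapsto t_g \bmod Y'_G$ (well defined and a homomorphism precisely because the defining relations of $G_{\ab}$ are killed modulo $Y'_G$). Checking $p\circ s = \id$ and $s\circ p = \id$ is then a short calculation, and it simultaneously identifies $\ZZZ^G/Y'_G$ with $G_{\ab}$ and shows $Y'_G = Y_G$. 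Since the paper cites \cite[Prop.\,9 and\,14]{AHN} for exactly these facts, I would expect the intended proof simply to invoke that reference for the group-theoretic computation of $Y_G$ and then note that $\BB_{kG}$, being the subalgebra generated by the $\sigma^{\pm 1}(g,h)$, equals the group algebra $kY_G$ of the group they generate; the only genuinely new observation needed is the specialization of~\eqref{sigma0-def} to the group-like case, which is immediate.
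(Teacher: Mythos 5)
Your proof is correct. Note, however, that the paper itself gives no argument for this proposition: it states beforehand, without proof, that $Y_G$ coincides with the subgroup of $\ZZZ^G$ generated by the $\sigma^{\pm 1}(g,h)$, and then attributes the equality $\BB_{kG}=kY_G$ entirely to \cite[Prop.\,9 and\,14]{AHN}. What you have done is supply the missing self-contained argument, and it is sound at every step: the specialization of~\eqref{sigma0-def} to group-likes gives $\sigma(g,h)=t_gt_h/t_{gh}$, so $\BB_{kG}$ is the group algebra of the subgroup $Y'_G$ generated by these units, and the inclusion $Y'_G\subseteq\Ker p=Y_G$ is the trivial direction. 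The only substantive point is $Y_G\subseteq Y'_G$, and your pair of mutually inverse maps handles it cleanly: $g\mapsto t_g \bmod Y'_G$ is a homomorphism into the abelian group $\ZZZ^G/Y'_G$ precisely because $\sigma(g,h)\in Y'_G$, it factors through $G_{\ab}$, is surjective since the $t_g$ generate $\ZZZ^G$, and is split by the map induced by~$p$; hence $\ZZZ^G/Y'_G\cong G_{\ab}$ and $Y'_G=\Ker p$. This is exactly the content of the cited results of Aljadeff--Haile--Natapov, so your write-up is a faithful (and more informative) replacement for the citation rather than a genuinely different strategy. One small remark: it is worth observing explicitly, as the paper does, that $t_e=\sigma(e,e)$ lies in $Y'_G$, which is what makes $kY'_G$ unital in the expected way; your argument covers this implicitly.
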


It follows that determining~$\BB_{kG}$ is equivalent to determining~$Y_G$:
in particular, if $(y_1, \ldots, y_N)$ is a basis of~$Y_G$, then 
$\BB_{kG}$ is the Laurent polynomial algebra
\[
\BB_{kG} = k \left[\, y_1, y_1^{-1}, \ldots, y_N , y_N^{-1} \,\right] .
\]
This shows that $\BB_{kG}$ is rational in the sense of Section\,\ref{subsec-relating}.

To find a basis of~$Y_G$ it is enough to find $N$ linearly independent elements $y_1, \ldots, y_N \in Y_G$
such that the determinant of the matrix of the elements~$y_i$ expressed in terms of the basis $(t_g)_{g\in G}$
of~$\ZZZ^G$ is, up to sign, equal to~$|G_{\ab}|$.
There are standard linear algebra techniques to find such a basis.

Another description of~$\BB_{kG}$ can be given following\,\cite[Sect.\,3.3]{KM}.
By Section\,\ref{subsec-loc} above,
the algebra $S(t_{kG})_{\Theta}$ carries a natural $G_{\ab}$-grading:
if $\overline{g}$ is an element of~$G_{\ab}$, then the corresponding $\overline{g}$-component  
of~$S(t_{kG})_{\Theta}$ is spanned by the monomials $\omega = t_{g_1}^{\pm 1} \cdots t_{g_r}^{\pm 1}$ 
such that $p(\omega) = \overline{g}$.
The generic base algebra~$\BB_{kG}$ coincides with the subalgebra of $S(t_{kG})_{\Theta}$ 
consisting of the elements of degree~$\overline{e} \in G_{\ab}$,
and there is a $\BB_{kG}$-linear isomorphism
\[
\BB_{kG} \otimes k G_{\ab} \cong S(t_{kG})_{\Theta} \, .
\]

From the previous observations one can find the following description of~$S(t_{kG})_{\Theta}$ as a $\BB_{kG}$-algebra.
Write $G_{\ab}$ as the product of $k$~cyclic groups; 
thus $G_{\ab}$ is generated by $k$~elements $\overline{g_1}, \ldots, \overline{g_k}$ of order~$n_1, \ldots, n_k$, respectively.
Then there is an algebra isomorphism
\begin{equation}\label{S-over-B}
S(t_{kG})_{\Theta} \cong \BB_{kG}[t_1, \ldots, t_k]/\left( t_i^{n^i} - (t_{g_i})^{n^i} \right)_{1\leq i \leq k} \, ,
\end{equation}
where $g_i$ is a lift of~$\overline{g_i}$ in~$G$.
The $G_{\ab}$-grading of~$S(t_{kG})_{\Theta}$ induces on the right-hand side of\,\eqref{S-over-B}
a grading for which each variable~$t_i$ is homogeneous of degree~$\overline{g_i}$.

The following lemma will be useful in the computations performed below.

\begin{lemma}\label{lem-YG}
Let $G = H \ltimes K$ be a semi-direct product of groups, where $K$ acts on~$H$. 
Suppose that this action induces a trivial action on~$H_{\ab}$.
Let $\underline{b}_H$ (resp.\ $\underline{b}_K$) be a basis of~$Y_H$ (resp.\ of~$Y_K$) containing~$t_e$.
Then 
\[
\{t_e\} \cup \left(\underline{b}_H - \{t_e\} \right) 
\cup (\underline{b}_K - \{t_e\}) \cup \left\{ \frac{t_{hk} }{t_h t_k} \right\}_{h\in H - \{e\}, \, k \in K - \{e\}}
\]
is a basis of~$Y_G$.
\end{lemma}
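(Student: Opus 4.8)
The plan is to apply the determinant criterion recalled just after Proposition~\ref{prop-BG}: since $Y_G$ is free of rank $N = |G|$ and of index $|G_{\ab}|$ in~$\ZZZ^G$, it suffices to check that the displayed set consists of $N$ elements of~$Y_G$ whose matrix in the basis $(t_g)_{g\in G}$ has determinant equal to $\pm |G_{\ab}|$ (nonvanishing of the determinant will simultaneously give linear independence). First I would record the cardinality: writing each element of~$G$ uniquely as $hk$ with $h\in H$ and $k\in K$, the four pieces contribute $1 + (|H|-1) + (|K|-1) + (|H|-1)(|K|-1) = |H|\,|K| = N$ elements.

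Next I would verify membership in~$Y_G$. The inclusion $H\hookrightarrow G$ induces a map $H_{\ab}\to G_{\ab}$, and the composite $\ZZZ^H \to \ZZZ^G \to G_{\ab}$ factors through~$H_{\ab}$; hence $Y_H\subseteq Y_G$, and likewise $Y_K\subseteq Y_G$. This handles $t_e$ together with $\underline{b}_H - \{t_e\}$ and $\underline{b}_K - \{t_e\}$. For the mixed elements, in the abelian group $G_{\ab}$ one has $p\bigl(t_{hk}/(t_h t_k)\bigr) = \overline{hk}\,\overline{h}^{-1}\overline{k}^{-1} = \overline{e}$, so they too lie in~$Y_G$. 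Note that no hypothesis on the action is needed for this step.

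For the determinant I would pass to additive notation in~$\ZZZ^G$ and split the basis $(t_g)$ into four blocks indexed by the decomposition $g=hk$: the single vector~$t_e$; the vectors~$t_h$ with $h\neq e$; the vectors~$t_k$ with $k\neq e$; and the mixed vectors~$t_{hk}$ with $h,k\neq e$. Ordering the proposed elements into the matching four groups, the mixed elements $t_{hk}-t_h-t_k$ are the only ones meeting the mixed block, on which they contribute the identity matrix, while the first three groups vanish there. Expanding along this identity block reduces the computation to the determinant of the square matrix~$M_1$ built from $t_e$, $\underline{b}_H - \{t_e\}$ and $\underline{b}_K - \{t_e\}$ against the first three column blocks. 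Since $\underline{b}_H$ involves only the~$t_h$ and $\underline{b}_K$ only the~$t_k$, the matrix~$M_1$ is block lower triangular with diagonal blocks $1$, $A$, $B$, where $A$ (resp.\ $B$) is the matrix of $\underline{b}_H - \{t_e\}$ (resp.\ $\underline{b}_K - \{t_e\}$) in the $t_h$, $h\neq e$ (resp.\ $t_k$, $k\neq e$) basis. As $\underline{b}_H$ is a basis of~$Y_H$ and $[\ZZZ^H:Y_H]=|H_{\ab}|$, the full matrix of~$\underline{b}_H$ in $(t_h)_{h\in H}$ has determinant $\pm|H_{\ab}|$; since $t_e\in\underline{b}_H$ has row $(1,0,\dots,0)$, this determinant equals $\det A$, so $\det A = \pm|H_{\ab}|$ and likewise $\det B = \pm|K_{\ab}|$. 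Hence the whole determinant is $\pm|H_{\ab}|\,|K_{\ab}|$.

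It remains to identify this with $\pm|G_{\ab}|$, and this is the only step where the hypothesis on the action enters, and where I expect the real subtlety to lie. For the split extension $G = H\ltimes K$ with $H$ normal, the abelianization fits into a split exact sequence $1 \to (H_{\ab})_K \to G_{\ab} \to K_{\ab} \to 1$, where $(H_{\ab})_K$ denotes the coinvariants of the $K$-action induced on~$H_{\ab}$. The assumption that $K$ acts trivially on~$H_{\ab}$ gives $(H_{\ab})_K = H_{\ab}$, whence $G_{\ab}\cong H_{\ab}\times K_{\ab}$ and $|G_{\ab}| = |H_{\ab}|\,|K_{\ab}|$. Combined with the determinant computation, the criterion then yields that the displayed set is a basis of~$Y_G$.
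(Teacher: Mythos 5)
Your proof is correct and follows essentially the same route as the paper's: count the elements, verify membership in $Y_G$, compute the block-triangular determinant to get $\pm|H_{\ab}|\,|K_{\ab}|$, and then identify this with $|G_{\ab}|$ using the triviality of the $K$-action on~$H_{\ab}$. The only cosmetic difference is in the last step, where the paper invokes the five-term Hochschild--Serre exact sequence for $H_1$ while you state the equivalent coinvariants sequence $1 \to (H_{\ab})_K \to G_{\ab} \to K_{\ab} \to 1$ directly.
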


This lemma applies to all direct products $G = H \times K$ of groups.

\begin{proof}
The above set clearly belongs to~$Y_G$ and is linearly independent. 
Its cardinality is equal to
\[
1 + (|H| - 1) + (|K| - 1) + (|H| - 1) (|K|-1) = |H| |K| = |G| = N \, .
\]
In order to conclude, it is enough to check that the matrix of this set with respect to the basis $(t_g)_{g\in G}$
has a determinant equal to~$\pm |G_{\ab}|$.

Clearly, this matrix has four diagonal square blocks corresponding to the four subsets of the assumed basis of~$Y_G$.
The non-zero entries that are not in these blocks sit above them. Hence the 
determinant we wish to compute is the product of the determinants of these four blocks.

The block of $\{t_e\}$ has determinant~$1$.
Since $[\ZZZ^H: Y_H] = |H_{\ab}|$ and $[\ZZZ^K: Y_K] = |K_{\ab}|$,
the determinant of the block corresponding to~$\underline{b}_H - \{t_e\}$ 
(resp.\ to~$\underline{b}_K - \{t_e\}$)
is equal, up to sign, to~$|H_{\ab}|$ (resp.\ to~$|K_{\ab}|$).
The last block, of size $(|H| - 1) (|K|-1) \times (|H| - 1) (|K|-1)$ has only diagonal entries, all equal to~$1$.
Therefore, the desired determinant is equal to $|H_{\ab}| \cdot |K_{\ab}|$.
We claim that the latter is equal to~$|G_{\ab}|$, which will complete the proof.

Indeed, for any group~$G$ the abelianization $G_{\ab}$ is equal to the first homology group~$H_1(G)$
of~$G$ with coefficients in~$\ZZZ$.
Since the natural projection $G \to K$ splits, the Hochschild--Serre sequence reduces to
the short exact sequence
\[
0 \to H_0(K, H_1(H)) \to H_1(G) \to H_1(K) \to 0 \, .
\]
Since $K$ acts trivially on~$H_{\ab} = H_1(H)$, we have 
$H_0(K, H_1(H)) = H_1(H)$. It follows that $|G_{\ab}|  = |H_{\ab}| \cdot |K_{\ab}|$.
\end{proof}

We now use Proposition\,\ref{prop-BG} and Lemma\,\ref{lem-YG} to give a 
full description of~$\BB_{kG}$ for certain finite groups.

\begin{exas}
(a) \emph{(Abelian groups)}
Let $G= \ZZZ/N$ be a \emph{cyclic group} with generator~$a$ and set $y_k = t_{a^k}/(t_a)^k$.
In\,\cite[Sect.\,3.3]{Ka0} we showed that
the generic base algebra $\BB_{kG}$ is the Laurent polynomial algebra
\[
\BB_{kG} = k \left[ \, y_0^{\pm 1}, y_2^{\pm 1}, y_3^{\pm 1}, \ldots, y_N^{\pm 1} \, \right] .
\]
and that there is an algebra isomorphism
$S(t_{kG})_{\Theta} \cong \BB_{kG} [t]/(t^N - (t_a)^N) $.

Lemma\,\ref{lem-YG} allows us to reduce the determination of a basis of~$Y_G$ of a finite abelian group~$G$ 
to the case of a cyclic group.  

For instance, let $G = \ZZZ/m \times \ZZZ/n$ with generators~$a,b$ of respective orders~$m$ and~$n$.
It follows from the previous computation and of Lemma\,\ref{lem-YG} that $Y_G$ has a basis consisting of the $mn$~elements
\begin{align*}
t_e \, , \; & (t_a)^m \, , \; \frac{t_{a^2}}{(t_a)^2} \, , \ldots, \frac{t_{a^{m-1}}}{(t_a)^{m-1}} \, , \\
&  (t_b)^n \, ,  \; \frac{t_{b^2}}{(t_b)^2} \, , \ldots, \frac{t_{b^{n-1}}}{(t_b)^{n-1}} \, , \\
& \frac{t_{a^ib^j}}{t_{a^i} t_{b^j}}  \quad (1 \leq i \leq m-1 , \; 1 \leq j \leq n-1) \, .
\end{align*}

(b) \emph{(Perfect groups)}
If $G$ is perfect, i.e., $G_{\ab} = 1$, then $Y_G = \ZZZ^G$, hence 
\[
\BB_{kG} = S(t_{kG})_{\Theta}  = k \left[\, t_g, t^{-1}_g \, |\, g\in G \,\right]  .
\]
This applies for instance to all simple non-abelian groups, such as the alternating group~$A_n$ when $n\geq 5$.

(c) \emph{(Symmetric groups)}
Let $n \geq 5$ and let $G = S_n$ be the symmetric group on $n$~elements. 
We can realize $G$ as a semi-direct product~$H \ltimes K$, where $H = A_n$
and $K = \ZZZ/2$ is generated by an odd permutation~$\tau$.
We can apply Lemma\,\ref{lem-YG} since $K$ acts trivially on~$(A_n)_{\ab} = \{1\}$.
As a basis for $Y_G$, we may take the elements $t_e$, $(t_{\tau})^2$, 
$t_{\sigma}$ and $t_{\sigma\tau}/t_{\sigma}t_{\tau}$, where $\sigma$ runs over the set $A_n-\{1\}$.
Moreover, by\,\eqref{S-over-B} we have an algebra isomorphism 
\[
S(t_{kG})_{\Theta} \cong \BB_{kG} [t]/(t^2 - (t_{\tau})^2) \, .
\]
\end{exas}

\subsection{The universal comodule algebra}\label{ssec-UG}

The tensor algebra~$T(X_{kG})$ is the free algebra on the symbols~$X_g$ ($g\in G$).
Let us use the same notation for the image of~$X_g$ in the universal comodule algebra~$\UU_{kG}$.

The universal comodule algebra map~$\mu_0$ identifies the element~$X_g \in \UU_{kG}$ 
with the following element of $S(t_{kG}) \otimes kG$:
\begin{equation*}
X_g = t_g \, g \qquad (g\in G) \, .
\end{equation*}
Therefore, under~$\mu_0$, the algebra $\UU_{kG}$ projects isomorphically onto the subalgebra of~$S(t_{kG}) \otimes kG$
generated by all elements~$t_g \, g$.

Under this identification, the subalgebra of coinvariants $\VV_{kG}$ of~$\UU_{kG}$ coincides with the subalgebra of~$S(t_{kG})$
spanned by $1$ and all elements of the form $t_{g_1} \cdots t_{g_r} $,
where $g_1 \cdots g_r = e$ (see\,\cite[Prop.\,6]{AHN}).

Set $P_g = X_g X_{g^{-1}}$ and $Q_{g,h} = X_g X_h X_{(gh)^{-1}}$ for $g, h\in G$. 
These are coinvariant elements of~$\UU_{kG}$ and their images in~$S(t_{kG})$ 
(under the injection~$\mu_0$) are given by
\begin{equation*}
P_g = t_g t_{g^{-1}}
\quad\text{and}\quad
Q_{g,h} = t_g t_h t_{(gh)^{-1}} \, .
\end{equation*}
They are invertible in~$S(t_{kG})_{\Theta}$. 

By\,\cite[Cor.\,4.5]{KM} the generic base algebra~$\BB_{kG}$ is generated by the elements $P_g$, $Q_{g,h}$
and their inverses.
It follows that $\BB_{kG}$ is a localization of~$\VV_{kG}$ and thus, the trivial cocycle is {nice}
in the sense of Section\,\ref{subsec-relating}.

More precisely, let $\VV'_{kG}$ be the localization of~$\VV_{kG}$ obtained by inverting the central elements
$P_g$ and~$Q_{g,h}$ ($g,h \in G$). Then
$\BB_{kG} = \VV'_{kG}$ and there is a $G$-graded $\BB_{kG}$-linear isomorphism between 
the central localization $\UU'_{kG} = \BB_{kG} \otimes_{\VV_{kG}} \UU_{kG}$
and~$\BB_{kG} \otimes kG$ (see Property\,(i) in Section\,\ref{subsec-relating}):
\begin{equation*}
\UU'_{kG} \cong \BB_{kG} \otimes kG \, .
\end{equation*}
In particular, $\UU'_{kG}$ is a free $\BB_{kG}$-module of rank~$N$.

\section*{Acknowledgment}

This work was started when the first-named author was visiting the Institut de Recherche Math\'e\-ma\-tique Avanc\'ee (IRMA)
in Strasbourg;
she gratefully acknowledges the hospitality and excellent working conditions there.
The authors warmly thank Eli Aljadeff for useful discussions on polynomial identities
and the anonymous referee for providing shorter proofs of Theorems~\ref{thm-BH-Taft},~\ref{prop-BH-En} 
and~\ref{thm-BH-mono}.

\end{document}